\documentclass[11pt,a4paper]{article}
\usepackage[margin=26mm]{geometry}
\usepackage[T1]{fontenc}
\usepackage{lmodern}
\usepackage{amsmath,amssymb,amsthm,mathtools}
\usepackage{booktabs,longtable,array}
\usepackage{hyperref}
\usepackage{enumitem}
\usepackage{microtype}
\usepackage{tikz}
\usepackage{graphicx}
\usetikzlibrary{arrows.meta,positioning}
\hypersetup{colorlinks=true,linkcolor=blue!50!black,urlcolor=blue!50!black}
\newtheorem{theorem}{Theorem}[section]
\newtheorem{lemma}[theorem]{Lemma}
\newtheorem{proposition}[theorem]{Proposition}
\newtheorem{corollary}[theorem]{Corollary}
\newtheorem{remark}[theorem]{Remark}
\newcommand{\ZZ}{\mathbb Z}
\newcommand{\FF}{\mathbb F_2}
\newcommand{\bd}{\partial}

\newcommand{\PP}{\mathsf P}
\newcommand{\TT}{\mathsf T}

\title{An A--B--C Coordinate Encoding for Exact Boundary-Cycle Extraction on Hexagonal Lattices\\\large Mathematical formulation, proofs, and source-level qualifications}
\author{Philippe Mullhaupt\\Laboratoire d'Automatique, EPFL\\Lausanne, Switzerland}
\date{October 2026}
\begin{document}
\maketitle
\begin{abstract}
We study an integer-coordinate encoding of boundary incidences for finite subsets of the regular hexagonal tiling. Each selected cell contributes six ordered records with endpoints in two role-distinguished classes, point and tail. Identical records from neighboring cells cancel, and the surviving records form a disjoint union of cycles, including the exterior boundaries of disconnected components and the boundaries of holes. We derive the actual even- and odd-row record formulas from the A--B--C transforms used in Hexagones 2.0, prove exact shared-edge identities and the global role-qualified incidence structure, and thereby obtain an unconditional correctness theorem for the \emph{abstract coordinate construction}. We compare the encoding with an independent exact cube-coordinate half-edge hashing implementation, proving an incidence isomorphism and checking 504 finite regions. We give an expected-linear-time indexed algorithm and distinguish it from the historical fixed-capacity C traversal, whose sentinel and closure conventions do not inherit the theorem automatically. The claim is a compact coordinate realization of a known planar boundary principle, not historical priority or a new three-axis lattice.
\end{abstract}

\section{Historical context and scope}
Hexagonal-grid geometry and digital topology have a substantial history. Early studies of hexagonal distance and coordinate descriptions include Luczak and Rosenfeld~\cite{luczak1976}; digital topology and contour definitions were treated by Kovalevsky~\cite{kovalevsky1984} and surveyed by Kong and Rosenfeld~\cite{kong1989}. The cellular-complex viewpoint is standard in algebraic topology~\cite{hatcher2002}, and its use with hexagonal and triangular coordinate systems has been studied explicitly by Nagy~\cite{nagy2015}. A modern practical account of cube and offset coordinates is provided by Patel~\cite{patelhex}.

These references are related at different levels: Luczak and Rosenfeld address hexagonal grid metrics, Kovalevsky addresses discrete contours, Kong and Rosenfeld survey digital topology, and Nagy treats coordinate labels for cells, edges, and vertices. In particular, Nagy's treatment is close in subject matter to the present representation and must not be mistaken for an unrelated application. No claim is made that integer hexagonal coordinates, edge cancellation, boundary cycles, or coordinate labels for vertices and edges are novel in themselves. The limited aim here is to give explicit parity-dependent formulas and a checkable proof for one particular point/tail encoding. We do not establish that this encoding is the first, unique, or more efficient than all earlier encodings.

\section{Problem and scope}
Let $\mathcal H$ be the infinite regular hexagonal tiling, with cells indexed by integer pairs $(x,y)$ in a staggered-row arrangement. Given a finite set $R\subset\ZZ^2$ of selected cells, our objective is to enumerate the full boundary of their union as a collection of closed edge cycles. We allow holes and disconnected selected regions. The computational method is restricted to integer arithmetic: it does not need floating-point intersections or a geometric polygon-union operation.

This note proves the mathematics of the local record construction and an indexed cycle extraction. It does \emph{not} assert that every detail of the original C function is correct for arbitrary inputs; Section~\ref{sec:implementation} makes this distinction explicit. The three-number coordinates, cellular boundary operator, and degree-two cycle decomposition are established concepts; no priority claim is made.

\section{A--B--C coordinates and integer conversion}
Define $(A,B,C)\sim(A+k,B+k,C+k)$ for $k\in\ZZ$. The quotient $\ZZ^3/\langle(1,1,1)\rangle$ is a rank-two lattice. Write
\begin{equation}\label{eq:normalize}
 N(A,B,C)=(A-m,B-m,C-m),\qquad m=\min(A,B,C).
\end{equation}
\begin{proposition}[Canonical normalization]
Every equivalence class contains precisely one triple with minimum coordinate zero.
\end{proposition}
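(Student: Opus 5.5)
The plan is to prove existence and uniqueness separately, using the map $N$ from \eqref{eq:normalize} as the witness for existence.

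\emph{Existence.} Take any class and a representative $(A,B,C)$, and let $m=\min(A,B,C)$. Then $N(A,B,C)=(A,B,C)+(-m)(1,1,1)$, which lies in the same class because $-m\in\ZZ$. Subtracting the same integer from every coordinate shifts the minimum by that integer, since $\min$ is translation-equivariant: $\min(A-m,B-m,C-m)=\min(A,B,C)-m=0$. So the class contains a triple with minimum coordinate zero.

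\emph{Uniqueness.} Suppose $(A,B,C)$ and $(A',B',C')$ are in the same class and both have minimum zero. By the definition of the equivalence, $(A',B',C')=(A+k,B+k,C+k)$ for some $k\in\ZZ$. Translation-equivariance of $\min$ then gives
\[0=\min(A',B',C')=\min(A,B,C)+k=k,\]
so $k=0$ and the two triples coincide.

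The only point that needs care is that the relation $\sim$ as written is a single step $(A,B,C)\mapsto(A+k,B+k,C+k)$, whereas the classes are those of the generated equivalence relation. I would check briefly that this one-step relation is already reflexive ($k=0$), symmetric ($-k$) and transitive ($k+k'$). Hence any two members of a class differ by exactly one integer multiple of $(1,1,1)$, and the classes are precisely the cosets of $\langle(1,1,1)\rangle$ in $\ZZ^3$. With that settled, there is no real obstacle: the whole argument rests on the identity $\min(A+k,B+k,C+k)=\min(A,B,C)+k$. As a consequence, $N$ is constant on each class and picks out its canonical representative, so $N$ is well defined on the quotient $\ZZ^3/\langle(1,1,1)\rangle$.
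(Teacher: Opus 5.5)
Your proof is correct and follows the paper's own argument: existence comes from subtracting the minimum, and uniqueness holds because a shift by $(k,k,k)$ changes the minimum by $k$, which forces $k=0$. Your extra check that the one-step relation is already an equivalence relation is a harmless detail that the paper leaves implicit.
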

\begin{proof}
Subtracting the minimum constructs one. If two such representatives differ by $(k,k,k)$, their minima differ by $k$; hence $k=0$.
\end{proof}

The source conversion from primal indices to A--B--C coordinates is
\begin{equation}\label{eq:forward}
 A=x+1+\left\lfloor\frac{y+1}{2}\right\rfloor,\quad B=y+1,\quad C=0.
\end{equation}
For a normalized triple $(a,b,c)$, the reverse conversion is
\begin{equation}\label{eq:reverse}
 X=a-\left\lfloor\frac{b+c}{2}\right\rfloor-1,\qquad Y=b-c-1.
\end{equation}
The C source uses integer division $(b+c)/2$ rather than a call to $\lfloor\cdot\rfloor$; after normalization $b+c\ge0$, so the expressions agree. Equation~\eqref{eq:forward} uses the mathematical floor of $(y+1)/2$; the source actually evaluates \texttt{floor(y/2)} after incrementing \texttt{y}. Since \texttt{y/2} is integer division in C, this agrees with the mathematical floor only for nonnegative incremented $y$. Accordingly, the literal source-level derivations in this note apply to the ordinary nonnegative map-index domain, with $y\ge -1$ sufficient for the forward division and additional bounds as appropriate. The algebraic extension of \eqref{eq:forward} to all $\ZZ^2$ is a mathematical extension, not a claim about negative C inputs.

For comparison, $(q,r,s)=(A-C,B-A,C-B)$ has $q+r+s=0$, so this lattice is isomorphic to standard cube coordinates. We make no novelty claim about the lattice itself.

\section{Six coordinate records per cell}
For $u=(A,B,C)$ define
\begin{equation}\label{eq:aux}
 \begin{aligned}U_0&=(A,B,C), & U_1&=(A+1,B+1,C),\\ U_2&=(A+1,B,C+1), & U_3&=(A,B+1,C),\\ U_4&=(A,B,C+1).\end{aligned}
\end{equation}
Apply the normalized reverse map~\eqref{eq:reverse} to each triple and write $D(U_i)$ for the resulting integer pair. The six records are
\begin{equation}\label{eq:records}
\begin{array}{lll}
 e_1=(D(U_2),D(U_1)),&e_2=(D(U_0),D(U_1)),&e_3=(D(U_0),D(U_3)),\\
 e_4=(D(U_4),D(U_3)),&e_5=(D(U_4),D(U_0)),&e_6=(D(U_2),D(U_0)).
\end{array}
\end{equation}
The first entry is a \emph{point} identifier, the second a \emph{tail} identifier. Their roles must be kept distinct: $(\PP,u,v)\ne(\TT,u,v)$ even if the displayed integer pairs coincide. Thus each record is an edge of a bipartite incidence graph, not a segment between two unqualified points in the ordinary $(X,Y)$ plane.

\begin{lemma}[Single-cell six-cycle]\label{lem:cellcycle}
For every cell, the six records in~\eqref{eq:records} form a simple six-cycle on the six role-qualified vertices $\PP D(U_2),\PP D(U_0),\PP D(U_4)$ and $\TT D(U_1),\TT D(U_3),\TT D(U_0)$.
\end{lemma}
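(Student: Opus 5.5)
The plan is to read off the incidence pattern of \eqref{eq:records} directly, and then reduce simplicity of the cycle to the distinctness of three point labels and three tail labels. Listing the records by their role-qualified endpoints gives
$e_1=\{\PP D(U_2),\TT D(U_1)\}$, $e_2=\{\PP D(U_0),\TT D(U_1)\}$, $e_3=\{\PP D(U_0),\TT D(U_3)\}$, $e_4=\{\PP D(U_4),\TT D(U_3)\}$, $e_5=\{\PP D(U_4),\TT D(U_0)\}$ and $e_6=\{\PP D(U_2),\TT D(U_0)\}$.
Traversing $e_1,e_2,\dots,e_6$ in order therefore yields the closed walk
\[
\PP D(U_2)\to\TT D(U_1)\to\PP D(U_0)\to\TT D(U_3)\to\PP D(U_4)\to\TT D(U_0)\to\PP D(U_2).
\]
Every listed vertex is met by exactly two of the records, and consecutive records share exactly one endpoint. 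Consequently the walk is a simple six-cycle as soon as its six vertices are pairwise distinct. A point vertex can never coincide with a tail vertex, because roles are part of the vertex identity by convention. It therefore remains to show two things: $D(U_0),D(U_2),D(U_4)$ are pairwise distinct, and $D(U_0),D(U_1),D(U_3)$ are pairwise distinct.

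The key step is to show that $D$ is a well-defined injective function on classes modulo $(1,1,1)$. For a class, set $t=A-C$ and $d=B-C$; these are invariants of the class and determine it. I will prove the closed formula
\[
D(A,B,C)=\Bigl(t-\bigl\lfloor d/2\bigr\rfloor-1,\; d-1\Bigr)
\]
by a short case analysis on which coordinate of the normalized representative vanishes, using the canonical normalization proposition.
\begin{itemize}
\item If $c=0$, then $a=t$ and $b=d$, and the formula is immediate.
\item If $a=0$, then $c=-t$ and $b=d-t$. Hence $\lfloor (b+c)/2\rfloor=\lfloor (d-2t)/2\rfloor=\lfloor d/2\rfloor-t$, which again gives $X=t-\lfloor d/2\rfloor-1$.
\item If $b=0$, then $c=-d$ and $a=t-d$. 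Hence $\lfloor c/2\rfloor=\lfloor -d/2\rfloor$, so $X=t-d-\lfloor -d/2\rfloor-1=t-\lceil d/2\rceil-\lfloor -d/2\rfloor+\lfloor d/2\rfloor-1$. Since $\lceil d/2\rceil+\lfloor -d/2\rfloor=0$, this reduces to the same formula.
\end{itemize}
From the formula, $Y$ recovers $d$, and then $X$ recovers $t$. So $D$ is injective on classes. This case check, especially the parity bookkeeping in the floors, is the only place I expect any real care to be needed. Equivalently, since $\lfloor d/2\rfloor$ depends only on $d$, one could argue directly that $t$ is determined by $X$ once $Y$ is fixed.

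With injectivity in hand, distinctness reduces to checking that the relevant triples are inequivalent modulo $(1,1,1)$. The pairwise differences among $U_0,U_2,U_4$ are $(1,0,1)$, $(0,0,1)$ and $(1,0,0)$. The pairwise differences among $U_0,U_1,U_3$ are $(1,1,0)$, $(0,1,0)$ and $(1,0,0)$. None of these six vectors is a multiple of $(1,1,1)$. Hence the three point labels are distinct, the three tail labels are distinct, and the closed walk above is a simple six-cycle on exactly the six role-qualified vertices named in Lemma~\ref{lem:cellcycle}.
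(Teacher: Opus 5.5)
Your proof is correct, but it establishes distinctness by a different route from the paper. Your first step matches the paper's: reading off the cyclic order $e_1,\dots,e_6$, with tail and point junctions alternating. The paper then gets distinctness from the explicit parity tables of Appendix~\ref{app:formulas}. In either row parity the point identifiers have second coordinates $y-1,y,y-1$ and the tail identifiers $y+1,y+1,y$, and the two sharing a second coordinate differ by one in the first. You instead show that $D$ is injective on $\ZZ^3/\langle(1,1,1)\rangle$ via the closed form $D=(t-\lfloor d/2\rfloor-1,\,d-1)$. Distinctness then reduces to the pairwise differences of the $U_i$ not being multiples of $(1,1,1)$.

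Your route has three advantages:
\begin{itemize}
\item It is parity-free.
\item It never uses the forward map~\eqref{eq:forward}, so it holds for an arbitrary base triple and is unaffected by the floor-versus-truncation caveat on negative C inputs.
\item It shows structurally why within-role collisions cannot occur: $D$ is an injective relabeling of lattice classes.
\end{itemize}
The paper's argument is shorter only because it reuses tables it needs anyway for Theorems~\ref{thm:neighbors} and~\ref{thm:incidence}.

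Two small points. In your case $b=0$ the intermediate expression has a sign slip. Since $d=\lceil d/2\rceil+\lfloor d/2\rfloor$, it should read $t-\lceil d/2\rceil-\lfloor d/2\rfloor-\lfloor -d/2\rfloor-1$, which does reduce to $t-\lfloor d/2\rfloor-1$. As written, with $+\lfloor d/2\rfloor$, it would not.

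Also, the whole case analysis can be skipped. The expression $a-\lfloor (b+c)/2\rfloor-1$ is already invariant under $(a,b,c)\mapsto(a+s,b+s,c+s)$, because $\lfloor (b+c)/2+s\rfloor=\lfloor (b+c)/2\rfloor+s$. So you can evaluate it directly at the representative $(t,d,0)$; this is the translation-invariance remark in the paper's derivation in Appendix~\ref{app:formulas}.
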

\begin{proof}
The cyclic order of the edges is $e_1,e_2,e_3,e_4,e_5,e_6$: successive pairs share alternately a tail and a point identifier, with $e_6$ returning to $e_1$ at the point identifier $D(U_2)$. The parity formulas of Appendix~\ref{app:formulas} show that the three point identifiers are distinct and the three tail identifiers are distinct, for either row parity. Hence the six role-qualified vertices are distinct and the walk is a simple six-cycle.
\end{proof}

\section{Exact neighbor matching}
For the remainder, let $e_i(x,y)$ denote the record from cell $(x,y)$. We use the notation $e_i(x,y)=e_j(x',y')$ for equality of both ordered integer endpoint pairs.

\begin{theorem}[Six-neighbor record identities]\label{thm:neighbors}
The six records of a cell $(x,y)$ are each shared with exactly one other cell. Their partners are as follows:
\begin{center}\small
\begin{tabular}{ccccc}\toprule
Central edge&Neighbor edge&Even $y$: $(\Delta x,\Delta y)$&Odd $y$: $(\Delta x,\Delta y)$\\\midrule
$1$&$4$&$(1,0)$&$(1,0)$\\
$2$&$5$&$(0,1)$&$(1,1)$\\
$3$&$6$&$(-1,1)$&$(0,1)$\\
$4$&$1$&$(-1,0)$&$(-1,0)$\\
$5$&$2$&$(-1,-1)$&$(0,-1)$\\
$6$&$3$&$(0,-1)$&$(1,-1)$\\\bottomrule
\end{tabular}
\end{center}
No third cell generates the same ordered record.
\end{theorem}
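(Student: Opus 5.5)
The plan is to make every record explicit as a function of $(x,y)$ and the parity of $y$, and then verify the table and the uniqueness claim by direct comparison. Throughout, I work in the source domain of Section~3, where the floors in \eqref{eq:forward} are the mathematical ones. I also assume $A,B\ge 1$, so that the minima of the auxiliary triples in \eqref{eq:aux} are the obvious ones.

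\textbf{Step 1: closed forms for the five identifiers.} Set $f=\lfloor (y+1)/2\rfloor$, so the cell's triple is $(x+1+f,\,y+1,\,0)$. Normalizing each $U_i$ by \eqref{eq:normalize} and applying \eqref{eq:reverse} should give the following.
\begin{itemize}[nosep]
\item $D(U_0)=(x,y)$ in both parities.
\item $D(U_1)=(x,y+1)$ for even $y$, and $(x+1,y+1)$ for odd $y$.
\item $D(U_3)=(x-1,y+1)$ for even $y$, and $(x,y+1)$ for odd $y$.
\item $U_2$ normalizes to $(A,B-1,0)$, giving $D(U_2)=(x,y-1)$ for even $y$ and $(x+1,y-1)$ for odd $y$.
\item $U_4$ normalizes to $(A-1,B-1,0)$, giving $D(U_4)=(x-1,y-1)$ for even $y$ and $(x,y-1)$ for odd $y$.
\end{itemize}
These are presumably the parity formulas of Appendix~\ref{app:formulas} already invoked in Lemma~\ref{lem:cellcycle}. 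If so, I would cite them rather than rederive them.

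\textbf{Step 2: the twelve identities.} I substitute the closed forms into \eqref{eq:records}, once for the central cell and once for the neighbor given in each row of the table. For example, for even $y$, both $e_1(x,y)$ and $e_4(x+1,y)$ equal $\bigl(\PP(x,y-1),\TT(x,y+1)\bigr)$. The parity of the neighbor matters: for rows with $\Delta y=\pm1$ the neighbor has the opposite parity, so its odd-row (respectively even-row) formulas must be used. Rows 4--6 follow from rows 1--3 by reading each identity from the neighbor's side. This is exactly why the even-row offsets in rows 4--6 are the negatives of the odd-row offsets in rows 1--3, and vice versa. This gives twelve routine checks.

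\textbf{Step 3: no third cell.} This is the real content of the theorem. From Step 1, the row of the point identifier and the row of the tail identifier of each record type are fixed relative to the generating cell's row $y$:
\begin{itemize}[nosep]
\item types $1$ and $4$ have point row $y-1$ and tail row $y+1$;
\item types $2$ and $3$ have point row $y$ and tail row $y+1$;
\item types $5$ and $6$ have point row $y-1$ and tail row $y$.
\end{itemize}
Hence, for a given ordered record $(\PP p,\TT t)$, the row difference of $t$ and $p$ (either $2$ or $1$), together with which pair is consistent, restricts the type to one of $\{1,4\}$, $\{2,3\}$, $\{5,6\}$.

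Distinguishing $\{2,3\}$ from $\{5,6\}$ needs a little more, since both have row difference $1$. Here I use that $D(U_0)=(x,y)$ appears as the point in types $2,3$ and as the tail in types $5,6$. If both readings were possible, the record would be produced by a cell in row $p_2$ and by a cell in row $t_2$. In that case the two cells are the matched pair of row 2 or row 3 of the table, so no additional cell arises.

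Once the type class is fixed, $y$ is determined, and therefore its parity. Each individual map $(x,y)\mapsto e_i(x,y)$ is then injective, because it is a translate in $x$. So each type contributes at most one generating cell, and a record has at most two generating cells, which must be the pair listed in the table. By Lemma~\ref{lem:cellcycle}, a single cell never produces the same record twice, so the pair consists of two distinct cells.

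\textbf{Main obstacle.} I expect the difficulty to lie in Step 3, specifically in cleanly separating the overlapping row-difference-$1$ classes. I would handle this by a short case split on whether $p$ or $t$ equals some cell's $D(U_0)$. A secondary concern is the normalization in Step 1: the minimum of $U_2$ and $U_4$ is $1$ only when $A,B\ge1$. I would state this domain assumption explicitly, or alternatively argue that $D$ composed with $N$ is class-invariant, so that the formulas extend algebraically to all of $\ZZ^2$.
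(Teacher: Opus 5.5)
Your Steps 1 and 2 are the paper's argument. The closed forms are the five-triple table of Appendix~\ref{app:formulas}, and the twelve substitutions are those of Appendix~\ref{app:neighbors}.

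Your Step 3 reaches uniqueness by a different inversion. The paper fixes the point endpoint $\PP(u,v)$ and uses the vertex inversion of Appendix~\ref{app:vertices} to obtain exactly three candidate cells. It then checks that the given tail occurs on exactly two of them. You invert by record type instead:
\begin{itemize}
\item the rows of $p$ and $t$ fix the class $\{1,4\}$, $\{2,3\}$ or $\{5,6\}$, and hence $y$;
\item then injectivity in $x$ (types $1,4$), or the fact that $D(U_0)$ is the cell's own label (types $2,3,5,6$), pins the generator.
\end{itemize}
Your route is self-contained and needs no vertex stars. The paper's route treats all six types uniformly and reuses tables it needs anyway for Theorem~\ref{thm:incidence}, so uniqueness and three-valence come out of one computation.

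When you write it up, say explicitly that in the row-difference-one case per-type injectivity alone allows four candidates. The bound two comes from your $D(U_0)$ remark. A type-$2/3$ generator must be the cell $p$ itself, and a type-$5/6$ generator must be the cell $t$. No cell emits the record twice, because $e_2\ne e_3$ and $e_5\ne e_6$ by Lemma~\ref{lem:cellcycle}.

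Do not keep $A,B\ge 1$ as the working hypothesis. It is not closed under passing to neighbours: a row-$0$ cell has its $e_5,e_6$ partners in row $-1$, where $B=0$. So it cannot support Steps 2--3 for every cell.

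The hypothesis is also unnecessary. With mathematical floors, the raw reverse formula $\bigl(a-\lfloor (b+c)/2\rfloor-1,\;b-c-1\bigr)$ is itself invariant under $(a,b,c)\mapsto(a+t,b+t,c+t)$, since $\lfloor (b+c+2t)/2\rfloor=\lfloor (b+c)/2\rfloor+t$. Hence any representative may be used for every $(x,y)\in\ZZ^2$, for example $(A,B-1,0)$ for $U_2$. This is what Appendix~\ref{app:formulas} relies on. That identity, not the automatic class-invariance of $D\circ N$, is the fact your fallback needs.
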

\begin{proof}
The exact even/odd formulas in Tables~\ref{tab:even} and~\ref{tab:odd} are obtained by substitution in~\eqref{eq:forward}--\eqref{eq:records}. Substituting each offset in the indicated neighboring parity gives identical first and second endpoints; all twelve cases can be checked term by term. For example, with even $y$,
\[
 e_2(x,y)=((x,y),(x,y+1))=e_5(x,y+1),
\]
where $y+1$ is odd. With odd $y$,
\[
 e_2(x,y)=((x,y),(x+1,y+1))=e_5(x+1,y+1),
\]
where $y+1$ is even.

To establish uniqueness, fix a record $e=(\PP(u,v),\TT(s,t))$. The role-qualified endpoint-incidence lists in Appendix~\ref{app:vertices} give exactly three candidate cells for the point endpoint. Among their six local records, the endpoint $\TT(s,t)$ occurs on exactly two cells, as the listed formulas show. Equivalently, at a point-role vertex the three incident edge types are pairwise distinct; every such type is generated by exactly two of its three cells. Thus no third cell produces the record. This argument applies to every integer translate and both parities.
\end{proof}

\section{Global endpoint incidence}
The local formulas also identify the three primal cells meeting at each role-qualified vertex. Appendix~\ref{app:vertices} lists all four parity/role cases, with the exact local edge numbers.

\begin{theorem}[Role-qualified incidence correspondence]\label{thm:incidence}
The bipartite graph generated by all records $e_i(x,y)$, with identical ordered records identified, is combinatorially isomorphic to the vertex--edge graph of the regular hexagonal tiling. Each role-qualified vertex is incident to three distinct full-lattice edges and three cells; every edge is incident to two cells.
\end{theorem}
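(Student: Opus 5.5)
I will prove Theorem~\ref{thm:incidence} by writing down an explicit bijection between the record graph and the vertex--edge graph of $\mathcal H$, and then checking that it preserves incidence. The natural choice is to map each cell $(x,y)$ to its hexagon in $\mathcal H$. The six records $e_1,\dots,e_6$ of that cell go to the six sides of the hexagon, in the cyclic order given by Lemma~\ref{lem:cellcycle}, and the six role-qualified vertices of that lemma go to the six corners. I will fix the labelling so that $e_1$ is the side shared with the neighbour $(x+1,y)$, $e_2$ the side shared with the upper-right neighbour, and so on counterclockwise, exactly as in the table of Theorem~\ref{thm:neighbors}. With this labelling, the corner between $e_i$ and $e_{i+1}$ is the vertex those two records share.

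\textbf{Edges.} Theorem~\ref{thm:neighbors} shows that each ordered record is produced by exactly two cells, namely the cell and its tabulated neighbour. Those two cells are precisely the two hexagons meeting along the corresponding side of $\mathcal H$. So after identifying equal records, each record class corresponds to exactly one lattice edge, the two-cell statement holds, and the edge map is well defined. It is injective because different lattice edges have different unordered pairs of adjacent cells, while the uniqueness part of Theorem~\ref{thm:neighbors} says a record determines its pair of cells. It is surjective because every lattice edge is a side of some hexagon.

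\textbf{Vertices.} Here I will use the lists in Appendix~\ref{app:vertices}, which handle the four parity/role cases. For a point vertex $\PP(u,v)$, and likewise for a tail vertex, the lists give exactly three cells whose local six-cycles contain it. I will check in each case that these three cells are pairwise adjacent, with offsets matching the table of Theorem~\ref{thm:neighbors}, so they form the triple of hexagons around a single corner of $\mathcal H$. The local edge numbers in the lists then show that the three incident records are the three pairwise-shared sides of that triple. These sides are distinct lattice edges, since their cell pairs differ, so each vertex has degree three. Two different role-qualified vertices cannot go to the same corner, because a corner determines its triple of cells, and the appendix shows the triple determines the vertex. Point vertices map to one colour class of the bipartite corner set and tail vertices to the other, consistent with the alternation in Lemma~\ref{lem:cellcycle}. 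Surjectivity onto corners follows because every corner lies on some hexagon.

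\textbf{Incidence.} Incidence is preserved locally inside each six-cycle by construction, and the local maps agree on overlaps by the edge and vertex identifications above. This gives a graph isomorphism.

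The main obstacle I expect is the well-definedness of the vertex map across cells. A single role-qualified identifier, such as $\PP D(U_0)$ computed in one cell, must be the same identifier as $\PP D(U_2)$ or $\PP D(U_4)$ computed in a neighbouring cell of the other row parity. I will settle this by a direct case check. For each of the four parity/role cases I will substitute the neighbour offsets into the parity formulas of Tables~\ref{tab:even} and~\ref{tab:odd} and confirm that the three local labels coincide. I will also confirm that no fourth cell produces the same labelled pair, again reading this off from the distinctness of the three identifiers within each role established in Lemma~\ref{lem:cellcycle}.
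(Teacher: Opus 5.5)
Your proposal is correct and takes the same route as the paper: cells go to staggered-row hexagons, each record goes to the common side of the two cells given by Theorem~\ref{thm:neighbors}, and each role-qualified vertex goes to the corner shared by the three pairwise-adjacent cells listed in Appendix~\ref{app:vertices}. Two justifications need repair, though the facts required are already in hand: first, a fourth cell at a vertex is excluded by the inversion argument of Appendix~\ref{app:vertices} (for a fixed role, each equation $D_i(x,y)=(u,v)$ has exactly one solution), not by the within-cell distinctness of Lemma~\ref{lem:cellcycle}, which only shows that the three solutions are different cells; second, edge injectivity needs that an adjacent cell pair determines its record (each cell has one record per neighbour offset), not merely that a record determines its cell pair.
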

\begin{proof}
Lemma~\ref{lem:cellcycle} gives one simple six-cycle for each cell. Appendix~\ref{app:vertices} shows that every role-qualified endpoint is used by exactly three distinct cells, with two consecutive edges contributed by each cell. Theorem~\ref{thm:neighbors} identifies the same edge in exactly two cells, and its endpoint lists show the three incident full-lattice edges at each vertex.

These incidence data give a locally hexagonal, three-valent cellulation. To identify it with the intended \emph{global} tiling, note that the cell labels $(x,y)$ and their six edge-neighbor offsets are precisely the even/odd staggered-row adjacency relations for the infinite hexagonal lattice. Map each abstract cell to the standard staggered-row hexagon with that label. Map each record to the unique common side of the two cells in Theorem~\ref{thm:neighbors}. At a cell, its six records correspond bijectively to the six distinct neighbors; at a vertex, the three cells in Appendix~\ref{app:vertices} are exactly the three cells meeting at the corresponding hexagon corner. Thus the map preserves and reflects cell--edge and edge--vertex incidence and is bijective on cells, edges and vertices. This is the desired combinatorial isomorphism.
\end{proof}

\begin{remark}
The proof concerns \emph{role-qualified} endpoints. If $\PP(u,v)$ and $\TT(u,v)$ were identified, a single cell would have a degree-four numerical vertex and the theorem would fail. Alternating point/tail matching is therefore structural, not merely a coding convenience.
\end{remark}

\section{Boundary cancellation and cycle decomposition}
For a finite set $R$ of cells, form the multiset of its $6|R|$ records. Cancel every identical pair and call the set of survivors $E_\bd(R)$.

\begin{theorem}[Exact cancellation]\label{thm:cancel}
A record survives if and only if its corresponding geometric edge has exactly one incident cell in $R$.
\end{theorem}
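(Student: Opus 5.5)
The argument is a multiplicity count over the multiset of $6|R|$ records, using Theorem~\ref{thm:neighbors} and the identification of records with geometric edges from Theorem~\ref{thm:incidence}.

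First I need the multiplicity of a single record. Fix an ordered record $e$ that occurs in the multiset, and let $g$ be the geometric edge to which Theorem~\ref{thm:incidence} maps it. By Theorem~\ref{thm:neighbors}, exactly two cells of the whole lattice generate $e$, and these are the two cells sharing the side $g$. No third cell produces the same ordered record.

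I also need that one cell cannot contribute $e$ twice. Lemma~\ref{lem:cellcycle} shows that the six records of a cell form a simple six-cycle on distinct role-qualified vertices, so they are pairwise distinct. Hence each cell generates $e$ at most once.

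Combining these two facts, the multiplicity of $e$ in the multiset equals the number of cells of $R$ incident to $g$. This number is $0$, $1$ or $2$.

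Next I pin down the cancellation rule. The multiset contains no record with multiplicity greater than two, so "cancel every identical pair" is unambiguous: a record of multiplicity $2$ is removed entirely. A record of multiplicity $1$ has no partner and survives. A record of multiplicity $0$ never appears in the multiset at all.

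The equivalence then follows in both directions:
\begin{itemize}
\item If $e$ survives, it appeared exactly once, so exactly one cell of $R$ is incident to $g$.
\item If $g$ has exactly one incident cell in $R$, that cell contributes $e$ once and the other adjacent cell is absent from $R$, so $e$ survives.
\end{itemize}

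The step needing the most care is the claim that distinct geometric edges give distinct ordered records. Without it, records coming from different geometric edges could cancel each other, or one record could sit on two edges. This injectivity is exactly the bijectivity on edges asserted in Theorem~\ref{thm:incidence}, together with the uniqueness clause of Theorem~\ref{thm:neighbors}. I would cite both explicitly.

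I would also remark that equality of records is equality of ordered role-qualified pairs. So a point/tail coincidence of numerical integer pairs cannot cause a spurious cancellation; this matches the Remark following Theorem~\ref{thm:incidence}.
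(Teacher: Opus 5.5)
Your proof is correct and follows the paper's own argument: Theorems~\ref{thm:neighbors} and~\ref{thm:incidence} show that each edge is generated by exactly its two incident cells, so a record occurs zero, one or two times and survives exactly when it occurs once. Your appeal to Lemma~\ref{lem:cellcycle}, which rules out one cell contributing the same record twice, makes explicit a point the paper's proof leaves implicit.
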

\begin{proof}
By Theorems~\ref{thm:neighbors} and~\ref{thm:incidence}, every geometric edge has exactly two incident cells and the two cells generate the same record. If neither is selected, the record occurs zero times; if one is selected, once; if both are selected, twice. Pair cancellation therefore retains exactly the edges with one selected incident cell.
\end{proof}

\begin{theorem}[Degree-two boundary graph]\label{thm:degree}
Every vertex of the surviving role-qualified boundary graph has degree zero or two.
\end{theorem}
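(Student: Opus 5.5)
The plan is to work locally at a single role-qualified vertex $v$ and count the survivors of Theorem~\ref{thm:cancel} incident to it. By Theorem~\ref{thm:incidence} and Appendix~\ref{app:vertices}, $v$ is incident to exactly three full-lattice edges $f_1,f_2,f_3$ and exactly three cells $c_1,c_2,c_3$. Each cell $c_j$ contributes two consecutive records at $v$; these are the two edges of its six-cycle from Lemma~\ref{lem:cellcycle} through $v$. Each edge $f_k$ lies between exactly two of the three cells, by Theorem~\ref{thm:neighbors}. I would label the cells so that $f_1$ separates $c_1,c_2$, $f_2$ separates $c_2,c_3$, and $f_3$ separates $c_3,c_1$.

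First I would justify that labelling. Each of the three edges is generated by two of the three cells, and each cell generates two of them, so there are six cell--edge incidences. Each pair of cells shares exactly one edge at $v$: they cannot share two, since then their two six-cycles would coincide locally, contradicting the uniqueness statement of Theorem~\ref{thm:neighbors}. The incidence structure at $v$ is therefore the triangle $K_3$, with cells as vertices and edges as sides. This is the step I expect to need the most care. I would read it directly off the four parity/role cases listed in Appendix~\ref{app:vertices}, rather than argue abstractly.

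Next, let $s=|R\cap\{c_1,c_2,c_3\}|\in\{0,1,2,3\}$. By Theorem~\ref{thm:cancel}, $f_k$ survives exactly when its two cells differ in membership in $R$. The surviving edges at $v$ are therefore the sides of the triangle that cut the 2-colouring of its vertices into selected and unselected cells. If $s=0$ or $s=3$, no side is cut, so the degree is $0$. If $s=1$ or $s=2$, exactly two sides join the singleton class to the other two cells, so the degree is $2$.

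Finally, vertices of the surviving graph not among these three-cell stars have no incident survivors at all. A vertex also cannot acquire extra degree from identical records in other places, since records are identified with geometric edges bijectively by Theorem~\ref{thm:incidence}. This shows every degree is $0$ or $2$. Role qualification matters here, as the Remark after Theorem~\ref{thm:incidence} notes: counting at the numerical pair instead would merge a $\PP$ and a $\TT$ vertex and could give degree four.
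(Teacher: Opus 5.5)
Your proposal is correct and takes essentially the same route as the paper: fix a role-qualified vertex, note that its three incident edges separate the three pairs of its three cells, and count the sides cut by the selection, which is exactly the paper's sum $(a_1\oplus a_2)+(a_2\oplus a_3)+(a_3\oplus a_1)\in\{0,2\}$. The paper takes your triangle labelling from the paired-edge table in Appendix~\ref{app:vertices}, and reading it off that table, as you propose, is the cleaner justification; if you argue it abstractly, replace the ``six-cycles coincide locally'' wording with the observation that, by Theorem~\ref{thm:neighbors}, distinct local edges of a cell have distinct partner cells.
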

\begin{proof}
Fix a vertex. Exactly three cells $f_1,f_2,f_3$ meet there by Theorem~\ref{thm:incidence}. Write $a_i\in\{0,1\}$ for their selection indicators. The three incident edges separate the pairs $(f_1,f_2)$, $(f_2,f_3)$, $(f_3,f_1)$. By Theorem~\ref{thm:cancel}, the number of surviving edges is
\[
 (a_1\oplus a_2)+(a_2\oplus a_3)+(a_3\oplus a_1),
\]
which is zero if all three values agree and two otherwise.
\end{proof}

\begin{theorem}[Complete closed-cycle decomposition]\label{thm:cycles}
For every finite cell set $R$, the surviving coordinate boundary graph is a disjoint union of simple cycles. Each boundary edge belongs to exactly one cycle. No hypothesis of connectedness or simple connectedness is required.
\end{theorem}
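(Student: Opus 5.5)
The plan is to derive the statement from the standard fact that a finite graph in which every vertex has degree zero or two splits into vertex-disjoint simple cycles.

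First, finiteness. The set $E_\bd(R)$ has at most $6|R|$ edges. The surviving graph is the subgraph of the role-qualified bipartite graph of Theorem~\ref{thm:incidence} spanned by these edges, together with their endpoints. Discarding vertices of degree zero, we are left with a finite graph $G$. By Theorem~\ref{thm:degree}, every vertex of $G$ has degree exactly two.

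Second, we must check that $G$ is a simple graph. By Theorem~\ref{thm:incidence}, identical ordered records are identified and each record is one full-lattice edge, so $G$ has no parallel edges. Since every record joins a $\PP$-vertex to a $\TT$-vertex, $G$ has no loops. This is the step we expect to need the most care. Without role qualification, the Remark after Theorem~\ref{thm:incidence} shows that numerical endpoints can merge and produce degree four. So the argument has to be run explicitly on role-qualified vertices, citing the isomorphism of Theorem~\ref{thm:incidence} to make sure no two distinct records share both endpoints.

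Third, the decomposition. Take a connected component $K$ of $G$. It is finite, connected and $2$-regular, so $|E(K)|=|V(K)|$ by the handshake identity. Start at a vertex $v_0$ and repeatedly leave each vertex by the edge not used to enter it. This is possible because every degree is two. By finiteness the walk eventually repeats a vertex. The first repeated vertex must be $v_0$: any other vertex would then have three incident edges on the walk, contradicting degree two. So the walk closes into a simple cycle $Z\subseteq K$. Every vertex of $Z$ already has both of its edges in $Z$, so no edge of $K$ leaves $Z$, and connectivity gives $Z=K$. The components are therefore simple cycles, pairwise vertex- and edge-disjoint, and every boundary edge lies in exactly one of them, namely the cycle of its component.

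Finally, no connectedness hypothesis on $R$ was used. Theorems~\ref{thm:cancel} and~\ref{thm:degree} hold for every finite $R$, since they are purely local. Exterior boundaries of separate components and boundaries of holes simply appear as different components of $G$. This completes the plan.
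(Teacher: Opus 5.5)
Your proposal is correct and follows the paper's proof: finiteness, Theorem~\ref{thm:degree}, and the standard fact that a finite graph whose vertices all have degree zero or two splits into vertex-disjoint simple cycles. You go further than the paper by proving that fact with the non-backtracking walk, and by checking explicitly that the role-qualified graph has no loops (each record joins a $\PP$-vertex to a $\TT$-vertex) and no parallel edges (immediate in fact, since a record is by definition its ordered endpoint pair).
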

\begin{proof}
The surviving graph is finite. By Theorem~\ref{thm:degree}, every non-isolated vertex has degree two. Each connected component of a finite degree-two graph is a cycle, and components are edge-disjoint. Theorem~\ref{thm:cancel} identifies the union of these cycles with the complete geometric boundary.
\end{proof}

\begin{corollary}[Holes and disconnected components]\label{cor:holes}
Let $c$ be the number of connected components of the union of selected closed hexagons, $h$ the number of bounded connected components of its complement, and $m$ the number of boundary cycles. Then $m=c+h$.
\end{corollary}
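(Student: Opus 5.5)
The plan is to pass from the role-qualified boundary graph to the geometric boundary, using the isomorphism of Theorem~\ref{thm:incidence}. There we apply a planar separation argument built on an auxiliary bipartite \emph{adjacency graph}. Let $S$ be the union of the selected closed hexagons and $\Omega=\mathbb R^2\setminus S$. Its nodes are the $c$ components of $S$ and the $h+1$ components of $\Omega$, counting the unbounded one. Its edges are the $m$ boundary cycles of Theorem~\ref{thm:cycles}. I will show that this graph is a tree. It then has $c+h+1$ nodes and $m$ edges, so $m=c+h$.

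First, I will check that each boundary cycle is a simple closed polygonal curve in the plane. Theorem~\ref{thm:incidence} maps role-qualified vertices and records bijectively to the corners and sides of the tiling. Theorem~\ref{thm:degree} gives degree two at every surviving vertex, so each component of Theorem~\ref{thm:cycles} is embedded as a Jordan polygon.

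I will also note that $S$ has no pinch points. At every lattice corner exactly three cells meet, and they are pairwise edge-adjacent. Two closed hexagons therefore meet iff they share a side. Hence the components of $S$ are exactly the edge-connected components of $R$, and $S$ is a $2$-manifold with boundary $\bigcup E_\bd(R)$.

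Next, I will attach each cycle $\gamma$ to its two sides. By Theorem~\ref{thm:cancel}, along every edge of $\gamma$ one incident cell lies in $R$ and the other does not. Along $\gamma$ the selected side stays in a single cell-connected region: consecutive edges at a degree-two vertex have their selected cells either equal or edge-adjacent, by the case analysis in the proof of Theorem~\ref{thm:degree}. The same holds for the unselected side. So $\gamma$ determines one component $K_\gamma$ of $S$ and one component $W_\gamma$ of $\Omega$, and it gives an edge of the adjacency graph.

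For connectivity, I will take any two nodes and join points in them by a generic path in $\mathbb R^2$ avoiding corners. Each crossing of $\partial S$ passes through a boundary edge, hence through some cycle, and so steps along an edge of the adjacency graph.

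For acyclicity, I will use the Jordan curve theorem. Removing $\gamma$ splits the plane into two regions, one containing $K_\gamma$ and the other $W_\gamma$ near $\gamma$. Every other cycle is disjoint from $\gamma$ and lies in one of these regions, so every node other than $K_\gamma,W_\gamma$ lies on one side. Deleting the edge $\gamma$ therefore disconnects the adjacency graph, so every edge is a bridge. A connected graph in which every edge is a bridge is a tree.

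The main obstacle is this acyclicity step. One must argue carefully that a component of $S$ or of $\Omega$ that avoids $\gamma$ cannot lie on both sides of it, which follows from connectedness. One must also argue that $K_\gamma$ and $W_\gamma$ are really on opposite sides, which follows from the local two-sidedness along one edge. I would try the Jordan-curve argument first. If that becomes delicate, the fallback is an Euler-characteristic count: compute $\chi(S)=V-E+F$ from the cellulation, and equate it with $c-h$ (Alexander duality) and, via the boundary, with the number of cycles.
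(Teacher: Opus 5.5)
Your proof is correct, and it takes a genuinely different route from the paper. The paper argues directly from surface topology. After Theorem~\ref{thm:cycles} and the no-pinch-point observation, it asserts that each component of the selected union has one outer boundary cycle plus one per hole, and sums. Equivalently, it combines $\chi=c-h$ with the boundary count of compact genus-zero surfaces. That is short, but it presupposes the classification of planar surfaces (or Alexander duality), and it leaves implicit the very correspondence being counted. With nesting, for instance a component lying inside a hole of another, one must know that the holes of the individual components correspond bijectively to the bounded components of the complement of the \emph{whole} union.

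Your bipartite component-adjacency graph makes that correspondence explicit using only the polygonal Jordan curve theorem. Root the tree at the unbounded component of $\Omega$ and send each non-root node to the edge joining it to its parent. This is a bijection from the $c+h$ non-root nodes onto the $m$ cycles: the outer cycle of each component of $S$, and the bounding cycle of each hole. Your Euler-characteristic fallback is essentially the paper's ``equivalently'' sentence. There the link to the boundary is $\chi(S)=2c-m$, which again uses planarity of each component.

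One point to tighten in the bridge step: $K_\gamma$ does not avoid $\gamma$ (it contains it), so your ``follows from connectedness'' remark does not cover it. You need $K_\gamma\setminus\gamma$ to be connected. This holds because the union of the open cells of $K_\gamma$ and their open shared sides is connected, misses $\partial S\supseteq\gamma$, and has closure containing $K_\gamma\setminus\gamma$. Without this, another boundary cycle of $K_\gamma$ could a priori lie on the side of $W_\gamma$ and provide a path around $\gamma$ in the adjacency graph. Likewise, for each other cycle $\gamma'$, note that points of $K_{\gamma'}$ and $W_{\gamma'}$ close to $\gamma'$ lie in the same component of $\mathbb R^2\setminus\gamma$ as $\gamma'$. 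Hence $\gamma'$ joins two nodes on the same side.
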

\begin{proof}
Theorem~\ref{thm:cycles} shows that the boundary consists of disjoint Jordan cycles. The selected union is a compact planar surface with boundary: the three-cells-per-vertex local configurations produce no pinched boundary vertices. Each connected component has one outer boundary and one additional boundary for each hole contained in that component. Summing over components gives $m=c+h$. Equivalently, $\chi=c-h$ and the total number of boundary components of a compact planar surface is $c+h$.
\end{proof}

\section{Indexed extraction and complexity}
Let $n=|R|$ and $b=|E_\bd(R)|\le 6n$. An indexed realization is:
\begin{enumerate}[label=\arabic*.]
\item For each cell, generate its six ordered integer records.
\item Maintain a dictionary keyed by all four endpoint integers. Insert an absent record and delete an existing one (symmetric difference).
\item For each surviving record, add its edge identifier to the incidence lists of its two \emph{role-qualified} endpoints.
\item While unused edges remain, choose one and follow the unique other incident edge at each vertex until the starting vertex is reached. Output the cycle and mark its edges used.
\end{enumerate}
\begin{theorem}[Indexed extraction]\label{thm:algorithm}
The algorithm terminates and outputs each boundary cycle exactly once, up to cyclic rotation and reversal. With expected constant-time dictionary operations, it uses expected $O(n+b)=O(n)$ time and $O(n)$ space.
\end{theorem}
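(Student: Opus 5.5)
The proof follows the four steps of the algorithm, establishing in turn that each step is correct and what it costs. For steps 1--2, we show that after processing all $6n$ generated records, the dictionary contains exactly $E_\bd(R)$. The dictionary realizes symmetric difference over the multiset of records. By Theorem~\ref{thm:neighbors}, every record occurs at most twice in the multiset, since only two cells generate it. Insert-if-absent/delete-if-present therefore leaves a record present exactly when its multiplicity is odd, that is, one. Theorem~\ref{thm:cancel} identifies this set with the boundary edges. Each of the $6n$ operations is an expected-constant-time dictionary operation on a four-integer key, so this phase takes expected $O(n)$ time. Since $b\le 6n$, it needs $O(n)$ space.

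For step 3, we key a second dictionary by role-qualified vertices, namely a role tag together with an integer pair. For each surviving record we append its identifier to the lists of its $\PP$ endpoint and its $\TT$ endpoint. By Theorem~\ref{thm:degree}, every vertex that appears has a list of length exactly two. This costs expected $O(b)$ time and space. The role tag is essential here: by the Remark after Theorem~\ref{thm:incidence}, merging $\PP(u,v)$ with $\TT(u,v)$ could create degree-four keys and break the step "follow the unique other incident edge."

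Step 4 is the main point to argue, namely termination and exactly-once output. Start from an unused edge $e_0$ and a chosen endpoint. At every vertex the walk exits by the other list entry, which exists and is unique because the degree is two. The walk is therefore the deterministic traversal of the connected component containing $e_0$. By Theorem~\ref{thm:cycles} that component is a simple cycle, so the walk returns to the start after exactly its length in steps, never revisiting an edge before closure. Termination follows. Marking the traversed edges used means that no later start lies on this cycle. Every edge eventually becomes used, because each cycle is discovered when its first unused edge is picked. Hence each cycle is output exactly once, and the only freedom is the choice of start edge and direction, which is rotation and reversal.

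For cost, each edge is traversed once, and each step needs $O(1)$ lookups in the vertex dictionary. Scanning for unused edges takes $O(b)$ overall if we iterate once through the survivor list with a pointer. The total time is expected $O(n+b)=O(n)$. I expect the only delicate point to be justifying that the walk never stalls or branches. This rests entirely on the degree-two property of role-qualified vertices, so the proof must cite Theorems~\ref{thm:degree} and~\ref{thm:cycles} explicitly rather than any property of the historical C traversal.
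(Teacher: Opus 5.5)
Your proposal is correct and follows the paper's proof step for step: Theorem~\ref{thm:cancel} handles the dictionary stage, Theorems~\ref{thm:degree} and~\ref{thm:cycles} give unique continuation and closure, marking gives exactly-once output, and the cost is the same $O(n)+O(b)+O(b)$ accounting; your role-tagged vertex keys and single pointer scan for unused edges spell out what the paper leaves implicit. Two small tightenings: the dictionary's $O(n)$ space follows from its peak size being at most $6n$ records rather than from $b\le 6n$, and the at-most-twice multiplicity also needs Lemma~\ref{lem:cellcycle}, which shows a single cell never emits the same record twice.
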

\begin{proof}
Theorem~\ref{thm:cancel} proves correctness of the dictionary stage. Theorem~\ref{thm:cycles} makes each connected component a simple cycle, so continuation through the other incident edge is unique and returns to the start after traversing exactly that component. Marking edges used prevents repetition and exhausts all components. Generation and dictionary operations take expected $O(n)$ time, incidence construction $O(b)$, and each boundary edge is traversed once, $O(b)$. The stored records and indices use $O(n)$ space.
\end{proof}

The expected linear bound is optimal in the standard explicit-input/output model, since $n$ cell labels must be read and $b$ boundary records output. Sorting records instead gives deterministic $O(n\log n)$ cancellation time. The original source uses sorting and repeated scans for continuation; it is \emph{not} an implementation of the indexed linear-time algorithm.

\section{Relation to existing mathematics}
The equivalence class of three-integer coordinates is a form of the conventional hexagonal lattice. Pair cancellation is the cellular boundary operator $\bd_2$ with coefficients in $\FF$; the boundary graph is degree two because three hexagons meet at a vertex. In algebraic-topological notation, the method realizes $\bd_2$ by concrete integer record identities. The point/tail role distinction is a coordinate realization of the bipartition of the hexagonal vertex graph. None of these general principles is claimed to be new.

The narrow technical contribution documented here is the particular A--B--C-to-record map~\eqref{eq:records}, its parity-explicit incidence identities, and its exact boundary-cycle interpretation. Claims of lower memory usage or speed relative to a well-engineered half-edge implementation require measurements and are not made here.

\section{Original C routine: scope of the mathematical proof}\label{sec:implementation}
The relevant Increment 38 source files contain the coordinate transformations and boundary-extraction routine. The latter uses the six formulas~\eqref{eq:records}, sorts the records, removes equal pairs, and searches for matching endpoints while alternating endpoint roles. It has implementation-specific limitations:
\begin{itemize}
\item At most ten returned loops and a fixed \texttt{maxSquaresForSuperpath} path capacity; capacity failure returns zero paths.
\item The value \texttt{2000} is used as a processed-coordinate sentinel, so inputs using this coordinate need special care.
\item Closure compares the stored \emph{numeric} coordinates of the current and starting positions; it does not explicitly compare role tags. The abstract proof requires role-qualified equality.
\item The duplicate-removal and end-of-array traversal logic require their own boundary-case audit, separate from the algebraic cancellation theorem.
\item The original continuation searches the array linearly and can require quadratic time in the number of boundary records.
\end{itemize}
Accordingly, Theorems~\ref{thm:incidence}--\ref{thm:algorithm} prove the mathematical construction and the stated indexed algorithm, not unrestricted behavior of this fixed-capacity C routine. No changes to game code are proposed here.

\section{Comparison with cube-coordinate half-edge hashing}\label{sec:comparison}
The A--B--C representation should not be compared with a straw-man
floating-point polygon union. A fair baseline is an integer-only
half-edge representation on the same tiling. We distinguish (i) the
coordinate system for cell \emph{centers} from (ii) the identifiers for
geometric \emph{edges and vertices}. The latter is where the present
encoding is particularly concise.

\subsection{Flattened cube coordinates and incidence identifiers}
For the staggered-row cell $(x,y)$, define the conventional axial/cube
coordinates
\begin{equation}\label{eq:cubemap}
 q=x-\left\lfloor\frac{y-(y\bmod2)}2\right\rfloor,\qquad
 r=y,\qquad s=-q-r.
\end{equation}
Here $y\bmod2\in\{0,1\}$ even for negative $y$. Only two coordinates
are independent. The map is a bijection from staggered-row cell labels
to the integer cube plane $q+r+s=0$.

A cube coordinate labels a \emph{cell center}; it does not, by itself,
identify a corner of a hexagonal cell. A direct, exact, geometry-free
baseline identifies a corner by the \emph{unordered triple of cell
centers} incident to it. Let $N_i(f)$ denote the neighboring cell
across local edge $e_i$ in the order $1,\ldots,6$, and let $Q(f)$ be
its cube coordinate. Define
\begin{equation}\label{eq:cubevertex}
 V_{ij}(f)=\operatorname{sort}\{Q(f),Q(N_i(f)),Q(N_j(f))\},
\end{equation}
for cyclically adjacent sides $i,j$. The sorted triple is invariant
under which of the three incident cells generates it. The
corresponding oriented half-edge record for local side $i$ is
\begin{equation}\label{eq:cubehalf}
 h_i(f)=\bigl(V_{i,p(i)}(f),V_{i,t(i)}(f)\bigr),
\end{equation}
where the local point/tail adjacent-side indices are
\begin{equation}\label{eq:adjtable}
\begin{array}{c|rrrrrr}
 i&1&2&3&4&5&6\\\hline
 p(i)&6&3&2&5&4&1\\
 t(i)&2&1&4&3&6&5
\end{array}.
\end{equation}
The notation ``half-edge'' here means an oriented incidence record
with ordered endpoints, not necessarily a full DCEL with explicit
\texttt{twin}, \texttt{next}, and face pointers. Shared sides produce
identical canonical records; if instead one stores opposite directed
half-edges, one must canonicalize before cancellation.

\begin{proposition}[Equivalence of exact incidence encodings]
\label{prop:comparison}
The point/tail edge records of~\eqref{eq:records} and the
cube-triple records of~\eqref{eq:cubehalf} encode isomorphic
cell--edge--vertex incidences. Consequently they yield identical
boundary cycle decompositions for every finite cell set.
\end{proposition}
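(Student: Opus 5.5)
The plan is to construct an explicit bijection $\Phi$ between the record-vertex set of \eqref{eq:records} and the set of sorted cube triples \eqref{eq:cubevertex}, and to check that it sends records to records and cells to cells. The decomposition statement then follows formally: Theorem~\ref{thm:cancel} and Theorem~\ref{thm:cycles} depend only on the incidence structure, so an incidence isomorphism carries the surviving multiset and its degree-two components from one encoding to the other.

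\textbf{Cells.} I would use the identity map on labels $(x,y)$, or equivalently compose with the bijection \eqref{eq:cubemap}, which is stated to be bijective onto the cube plane. The neighbor maps $N_i$ are exactly the offsets tabulated in Theorem~\ref{thm:neighbors}. Hence the cell adjacency relations agree by definition.

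\textbf{Vertices.} For a role-qualified vertex $v$, Appendix~\ref{app:vertices} (used in Theorem~\ref{thm:incidence}) provides exactly three incident cells $f_1,f_2,f_3$. I would define $\Phi(v)=\operatorname{sort}\{Q(f_1),Q(f_2),Q(f_3)\}$. Well-definedness follows because the triple of incident cells is intrinsic to $v$.

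To show $\Phi$ is a bijection onto the set of cube vertex triples, I would use Theorem~\ref{thm:incidence}. Vertices of the tiling correspond bijectively to triples of mutually adjacent cells. Such a triple is $\{f,N_i(f),N_j(f)\}$ with $i,j$ cyclically adjacent, and distinct vertices give distinct triples because $Q$ is injective.

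Roles need no separate encoding on the cube side. The point/tail class is recovered from the triple's shape (the two orientations of triangles in the dual lattice), which matches the bipartition.

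\textbf{Edges.} The central step is to check, for each $i$ and each row parity, that $\Phi$ maps the endpoints of $e_i(f)$ to $V_{i,p(i)}(f)$ and $V_{i,t(i)}(f)$ respectively. Concretely, the point endpoint of $e_i$ must be the corner shared by $f$, $N_i(f)$ and $N_{p(i)}(f)$.

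I would verify this from Lemma~\ref{lem:cellcycle}. In the cyclic order $e_1,\dots,e_6$, the records $e_i$ and $e_{p(i)}$ share a point identifier, and $e_i$ and $e_{t(i)}$ share a tail identifier. Table \eqref{eq:adjtable} encodes exactly this: $1,6$ share $\PP D(U_2)$; $1,2$ share $\TT D(U_1)$; $2,3$ share $\PP D(U_0)$; and so on.

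The vertex common to records $e_i$ and $e_j$ of cell $f$ is incident to the cells across both sides, namely $N_i(f)$ and $N_j(f)$ by Theorem~\ref{thm:neighbors}, together with $f$. So its three incident cells are exactly those in $V_{ij}(f)$.

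This gives $\Phi\times\Phi\,(e_i(f))=h_i(f)$, with the ordering preserved. As a consequence, identical records from the two cells sharing a side map to identical $h$-records, so cancellation commutes with $\Phi$.

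\textbf{Main obstacle.} I expect the hardest part to be the bookkeeping that the pairing of sides $i$ with $p(i),t(i)$ in \eqref{eq:adjtable} matches the point/tail labeling in \eqref{eq:records}, consistently for both parities. I would first read this off the cyclic adjacency in Lemma~\ref{lem:cellcycle}, which is parity independent. Only the identification of $N_i$ through Theorem~\ref{thm:neighbors} requires parity cases, and those are already established.

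With the incidence isomorphism in hand, apply Theorems~\ref{thm:cancel}--\ref{thm:cycles}: survivors correspond under $\Phi$, and the degree-two components correspond, so the cycle decompositions are identical.
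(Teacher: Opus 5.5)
Your proposal is correct and follows essentially the same route as the paper: identify each role-qualified vertex with the sorted cube triple of its three incident cells, and get bijectivity from the fact that three mutually adjacent cells meet at a unique corner; then check that the endpoints of $e_i(f)$ go to $V_{i,p(i)}(f)$ and $V_{i,t(i)}(f)$, and transport cancellation and the degree-two cycle decomposition across. Your derivation of table~\eqref{eq:adjtable} from the cyclic order in Lemma~\ref{lem:cellcycle}, together with the observation that the corner shared by $e_i(f)$ and $e_j(f)$ is incident to exactly $f$, $N_i(f)$, $N_j(f)$, supplies the justification that the paper's proof only asserts, and it checks out in both row parities.
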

\begin{proof}
Theorem~\ref{thm:incidence} identifies each role-qualified A--B--C
vertex with exactly one geometric corner, incident to exactly three
cells. Equation~\eqref{eq:cubevertex} identifies that same corner by
its three incident cell centers. Distinct corners cannot have the
same incident-cell triple: in the regular hexagonal tiling three
mutually adjacent cells meet at a unique corner. Thus the assignment
from an A--B--C role-qualified identifier to the sorted cube triple
is well-defined and bijective. The two endpoints of each local edge
are its two corners; the adjacency table~\eqref{eq:adjtable} selects
the point and tail corners in the same order as~\eqref{eq:records}.
The map therefore preserves edge incidence and record equality.
Cancellation and cycle extraction depend only on these relations,
so the resulting cycle decompositions agree.
\end{proof}

\subsection{Representation cost and computational trade-offs}
A normalized A--B--C triple contains three nonnegative integers but
has only two degrees of freedom. Its cell labels are not intrinsically
more information-efficient than cube or axial coordinates. The
additional benefit is that its local arithmetic supplies \emph{edge
endpoint identifiers} without explicitly storing the three incident
cell centers. In the present implementation, an A--B--C edge key
consists of two role-qualified integer pairs, i.e. four integer
components plus implicit point/tail roles. The cube-triple baseline
uses two endpoint keys, each containing three cube centers (nine
integer components before exploiting the sum-zero relation or
canonical compression). This is a comparison of \emph{uncompressed
logical keys}, not of optimized memory footprints: a conventional
integer vertex coordinate, lattice embedding, or a compact DCEL
can eliminate much of the baseline's redundancy.

Both constructions generate six records per cell, hash canonical
edges, index surviving edges by endpoints, and traverse degree-two
components. With expected constant-time dictionary operations, both
have expected $O(n+b)=O(n)$ time and $O(n)$ space, where $n$ is the
number of selected cells and $b$ the number of boundary edges.
The point/tail construction does not improve this asymptotic bound.
Its identifiable merit is an explicit local integer labeling of
vertices and edges without a separate vertex construction.

\subsection{Reproducible implementation comparison}
The companion program
\path{verification/compare_encodings.py} implements both encodings
independently, checks a global bijection between their vertex keys
on a finite interior window, compares exact boundary edge sets under
that bijection, and compares sorted cycle-length multisets for 504
finite cell sets. All these checks passed in the supplied run. A
small optional Python timing experiment is provided as a diagnostic,
not as a performance claim: the cube-triple baseline deliberately
uses large nested tuple keys and is not a memory-optimized or
compiled half-edge implementation. Runtime ratios therefore do not
establish an intrinsic advantage of A--B--C over optimized cube or
half-edge schemes.

\begin{remark}[What is and is not established]
The formal equivalence proof is combinatorial and applies to all
finite cell sets. The accompanying executable tests are finite
regression checks, not the proof. Neither the equivalence nor the
timings demonstrate historical novelty, minimal encoding size, or
universal superiority over existing representations.
\end{remark}

\section{Conclusion}
The A--B--C point/tail formulas give a faithful integer-coordinate encoding of the regular hexagonal cell complex. Exact record cancellation produces precisely the selected region's boundary, whose degree-two graph decomposes into all closed boundary cycles, with no need to classify holes in advance. An indexed implementation has expected linear complexity. The result is a focused coordinate realization and correctness analysis, not a claim of historical priority.

\section{Coordinate and boundary illustrations}
The eight diagrams below are drawn directly in TikZ and remain fully scalable. They are schematic illustrations of the incidence construction; the parity-specific algebraic identities are established in Appendices A--D.
\begingroup\centering\setlength{\tabcolsep}{4pt}
\begin{minipage}[t]{0.48\linewidth}
\centering
\resizebox{0.96\linewidth}{!}{\begin{tikzpicture}[x=0.85cm,y=0.85cm,baseline=(current bounding box.center)]
\path[draw=gray!45,fill=white,line width=.45pt] (-1.634,-0.470) -- (-2.050,-0.230) -- (-2.466,-0.470) -- (-2.466,-0.950) -- (-2.050,-1.190) -- (-1.634,-0.950) -- cycle;
\path[draw=gray!45,fill=white,line width=.45pt] (-1.224,0.240) -- (-1.640,0.480) -- (-2.056,0.240) -- (-2.056,-0.240) -- (-1.640,-0.480) -- (-1.224,-0.240) -- cycle;
\path[draw=gray!45,fill=white,line width=.45pt] (-0.814,0.950) -- (-1.230,1.190) -- (-1.646,0.950) -- (-1.646,0.470) -- (-1.230,0.230) -- (-0.814,0.470) -- cycle;
\path[draw=gray!45,fill=white,line width=.45pt] (-0.814,-0.470) -- (-1.230,-0.230) -- (-1.646,-0.470) -- (-1.646,-0.950) -- (-1.230,-1.190) -- (-0.814,-0.950) -- cycle;
\path[draw=gray!45,fill=white,line width=.45pt] (-0.404,0.240) -- (-0.820,0.480) -- (-1.236,0.240) -- (-1.236,-0.240) -- (-0.820,-0.480) -- (-0.404,-0.240) -- cycle;
\path[draw=gray!45,fill=white,line width=.45pt] (0.006,0.950) -- (-0.410,1.190) -- (-0.826,0.950) -- (-0.826,0.470) -- (-0.410,0.230) -- (0.006,0.470) -- cycle;
\path[draw=gray!45,fill=white,line width=.45pt] (0.006,-0.470) -- (-0.410,-0.230) -- (-0.826,-0.470) -- (-0.826,-0.950) -- (-0.410,-1.190) -- (0.006,-0.950) -- cycle;
\path[draw=gray!45,fill=cyan!17,line width=.45pt] (0.416,0.240) -- (0.000,0.480) -- (-0.416,0.240) -- (-0.416,-0.240) -- (-0.000,-0.480) -- (0.416,-0.240) -- cycle;
\path[draw=gray!45,fill=white,line width=.45pt] (0.826,0.950) -- (0.410,1.190) -- (-0.006,0.950) -- (-0.006,0.470) -- (0.410,0.230) -- (0.826,0.470) -- cycle;
\path[draw=gray!45,fill=white,line width=.45pt] (0.826,-0.470) -- (0.410,-0.230) -- (-0.006,-0.470) -- (-0.006,-0.950) -- (0.410,-1.190) -- (0.826,-0.950) -- cycle;
\path[draw=gray!45,fill=white,line width=.45pt] (1.236,0.240) -- (0.820,0.480) -- (0.404,0.240) -- (0.404,-0.240) -- (0.820,-0.480) -- (1.236,-0.240) -- cycle;
\path[draw=gray!45,fill=white,line width=.45pt] (1.646,0.950) -- (1.230,1.190) -- (0.814,0.950) -- (0.814,0.470) -- (1.230,0.230) -- (1.646,0.470) -- cycle;
\path[draw=gray!45,fill=white,line width=.45pt] (1.646,-0.470) -- (1.230,-0.230) -- (0.814,-0.470) -- (0.814,-0.950) -- (1.230,-1.190) -- (1.646,-0.950) -- cycle;
\path[draw=gray!45,fill=white,line width=.45pt] (2.056,0.240) -- (1.640,0.480) -- (1.224,0.240) -- (1.224,-0.240) -- (1.640,-0.480) -- (2.056,-0.240) -- cycle;
\path[draw=gray!45,fill=white,line width=.45pt] (2.466,0.950) -- (2.050,1.190) -- (1.634,0.950) -- (1.634,0.470) -- (2.050,0.230) -- (2.466,0.470) -- cycle;\node[font=\scriptsize] at (0,0) {$f$};
\end{tikzpicture}}
\par\vspace{2pt}{\footnotesize\textbf{Fig. 1.} Hexagonal cells and their local neighborhood.\par}
\end{minipage}
\hfill
\begin{minipage}[t]{0.48\linewidth}
\centering
\resizebox{0.96\linewidth}{!}{\begin{tikzpicture}[x=0.85cm,y=0.85cm,baseline=(current bounding box.center)]
\path[draw=gray!45,fill=white,line width=.45pt] (-0.814,-0.470) -- (-1.230,-0.230) -- (-1.646,-0.470) -- (-1.646,-0.950) -- (-1.230,-1.190) -- (-0.814,-0.950) -- cycle;
\path[draw=gray!45,fill=white,line width=.45pt] (-0.404,0.240) -- (-0.820,0.480) -- (-1.236,0.240) -- (-1.236,-0.240) -- (-0.820,-0.480) -- (-0.404,-0.240) -- cycle;
\path[draw=gray!45,fill=white,line width=.45pt] (0.006,0.950) -- (-0.410,1.190) -- (-0.826,0.950) -- (-0.826,0.470) -- (-0.410,0.230) -- (0.006,0.470) -- cycle;
\path[draw=gray!45,fill=white,line width=.45pt] (0.006,-0.470) -- (-0.410,-0.230) -- (-0.826,-0.470) -- (-0.826,-0.950) -- (-0.410,-1.190) -- (0.006,-0.950) -- cycle;
\path[draw=gray!45,fill=cyan!17,line width=.45pt] (0.416,0.240) -- (0.000,0.480) -- (-0.416,0.240) -- (-0.416,-0.240) -- (-0.000,-0.480) -- (0.416,-0.240) -- cycle;
\path[draw=gray!45,fill=white,line width=.45pt] (0.826,0.950) -- (0.410,1.190) -- (-0.006,0.950) -- (-0.006,0.470) -- (0.410,0.230) -- (0.826,0.470) -- cycle;
\path[draw=gray!45,fill=white,line width=.45pt] (0.826,-0.470) -- (0.410,-0.230) -- (-0.006,-0.470) -- (-0.006,-0.950) -- (0.410,-1.190) -- (0.826,-0.950) -- cycle;
\path[draw=gray!45,fill=white,line width=.45pt] (1.236,0.240) -- (0.820,0.480) -- (0.404,0.240) -- (0.404,-0.240) -- (0.820,-0.480) -- (1.236,-0.240) -- cycle;
\path[draw=gray!45,fill=white,line width=.45pt] (1.646,0.950) -- (1.230,1.190) -- (0.814,0.950) -- (0.814,0.470) -- (1.230,0.230) -- (1.646,0.470) -- cycle;\node[font=\scriptsize] at (0,0) {$(0,0,0)$};\node[font=\scriptsize] at (0,1.25) {$N(A,B,C)=(A-m,B-m,C-m)$};\node[font=\scriptsize] at (0,-1.35) {$m=\min(A,B,C)$};
\end{tikzpicture}}
\par\vspace{2pt}{\footnotesize\textbf{Fig. 2.} Canonical three-integer coordinate normalization.\par}
\end{minipage}
\par\vspace{9pt}
\begin{minipage}[t]{0.48\linewidth}
\centering
\resizebox{0.96\linewidth}{!}{\begin{tikzpicture}[x=0.85cm,y=0.85cm,baseline=(current bounding box.center)]
\draw[thick,gray!75] (0.000,1.350) -- (1.169,0.675);
\draw[thick,gray!75] (1.169,0.675) -- (1.169,-0.675);
\draw[thick,gray!75] (1.169,-0.675) -- (0.000,-1.350);
\draw[thick,gray!75] (0.000,-1.350) -- (-1.169,-0.675);
\draw[thick,gray!75] (-1.169,-0.675) -- (-1.169,0.675);
\draw[thick,gray!75] (-1.169,0.675) -- (0.000,1.350);
\node[circle,fill=blue!70,minimum size=5pt,inner sep=0pt] at (0.000,1.350) {};\node[font=\scriptsize] at (0.000,1.688) {$ P_1 $};\node[circle,fill=red!75,minimum size=5pt,inner sep=0pt] at (1.169,0.675) {};\node[font=\scriptsize] at (1.461,0.844) {$ T_2 $};\node[circle,fill=blue!70,minimum size=5pt,inner sep=0pt] at (1.169,-0.675) {};\node[font=\scriptsize] at (1.461,-0.844) {$ P_3 $};\node[circle,fill=red!75,minimum size=5pt,inner sep=0pt] at (0.000,-1.350) {};\node[font=\scriptsize] at (0.000,-1.688) {$ T_4 $};\node[circle,fill=blue!70,minimum size=5pt,inner sep=0pt] at (-1.169,-0.675) {};\node[font=\scriptsize] at (-1.461,-0.844) {$ P_5 $};\node[circle,fill=red!75,minimum size=5pt,inner sep=0pt] at (-1.169,0.675) {};\node[font=\scriptsize] at (-1.461,0.844) {$ T_6 $};
\end{tikzpicture}}
\par\vspace{2pt}{\footnotesize\textbf{Fig. 3.} Six alternating point/tail incidence vertices.\par}
\end{minipage}
\hfill
\begin{minipage}[t]{0.48\linewidth}
\centering
\resizebox{0.96\linewidth}{!}{\begin{tikzpicture}[x=0.85cm,y=0.85cm,baseline=(current bounding box.center)]
\node[circle,fill=blue!70,minimum size=7pt,inner sep=0pt] (p) at (-1,0) {};\node[circle,fill=red!75,minimum size=7pt,inner sep=0pt] (t) at (1,0) {};\node[font=\small,below=6pt] at (p) {$P(u)$};\node[font=\small,below=6pt] at (t) {$T(u)$};\node[font=\scriptsize] at (0,1) {same numeric triple $u$};\node[font=\scriptsize] at (0,-1) {$P(u)\ne T(u)$};
\end{tikzpicture}}
\par\vspace{2pt}{\footnotesize\textbf{Fig. 4.} Role qualification prevents accidental identification.\par}
\end{minipage}
\par\vspace{9pt}
\begin{minipage}[t]{0.48\linewidth}
\centering
\resizebox{0.96\linewidth}{!}{\begin{tikzpicture}[x=0.85cm,y=0.85cm,baseline=(current bounding box.center)]
\path[draw=gray!45,fill=cyan!20,line width=.45pt] (-0.001,0.415) -- (-0.720,0.830) -- (-1.439,0.415) -- (-1.439,-0.415) -- (-0.720,-0.830) -- (-0.001,-0.415) -- cycle;\path[draw=gray!45,fill=orange!20,line width=.45pt] (1.439,0.415) -- (0.720,0.830) -- (0.001,0.415) -- (0.001,-0.415) -- (0.720,-0.830) -- (1.439,-0.415) -- cycle;\draw[red!80!black,very thick] (0,-.415) -- (0,.415);\node[font=\scriptsize] at (-.72,0) {$f$};\node[font=\scriptsize] at (.72,0) {$g$};\node[font=\scriptsize] at (0,-1.2) {$e_f=e_g\ \Rightarrow\ e_f\oplus e_g=0$};
\end{tikzpicture}}
\par\vspace{2pt}{\footnotesize\textbf{Fig. 5.} Two neighboring cells cancel their common edge.\par}
\end{minipage}
\hfill
\begin{minipage}[t]{0.48\linewidth}
\centering
\resizebox{0.96\linewidth}{!}{\begin{tikzpicture}[x=0.85cm,y=0.85cm,baseline=(current bounding box.center)]
\path[draw=gray!45,fill=cyan!17,line width=.45pt] (0.675,0.780) -- (0.338,0.975) -- (0.000,0.780) -- (0.000,0.390) -- (0.338,0.195) -- (0.675,0.390) -- cycle;
\path[draw=gray!45,fill=cyan!17,line width=.45pt] (1.351,-0.390) -- (1.013,-0.195) -- (0.675,-0.390) -- (0.675,-0.780) -- (1.013,-0.975) -- (1.351,-0.780) -- cycle;
\path[draw=gray!45,fill=cyan!17,line width=.45pt] (-0.675,-0.390) -- (-1.013,-0.195) -- (-1.351,-0.390) -- (-1.351,-0.780) -- (-1.013,-0.975) -- (-0.675,-0.780) -- cycle;
\path[draw=gray!45,fill=cyan!17,line width=.45pt] (-1.351,-0.390) -- (-1.689,-0.195) -- (-2.026,-0.390) -- (-2.026,-0.780) -- (-1.689,-0.975) -- (-1.351,-0.780) -- cycle;
\path[draw=gray!45,fill=cyan!17,line width=.45pt] (0.338,0.195) -- (0.000,0.390) -- (-0.338,0.195) -- (-0.338,-0.195) -- (-0.000,-0.390) -- (0.338,-0.195) -- cycle;
\path[draw=gray!45,fill=cyan!17,line width=.45pt] (2.026,0.780) -- (1.689,0.975) -- (1.351,0.780) -- (1.351,0.390) -- (1.689,0.195) -- (2.026,0.390) -- cycle;
\path[draw=gray!45,fill=cyan!17,line width=.45pt] (-0.675,0.780) -- (-1.013,0.975) -- (-1.351,0.780) -- (-1.351,0.390) -- (-1.013,0.195) -- (-0.675,0.390) -- cycle;
\path[draw=gray!45,fill=cyan!17,line width=.45pt] (0.000,0.780) -- (-0.338,0.975) -- (-0.675,0.780) -- (-0.675,0.390) -- (-0.338,0.195) -- (-0.000,0.390) -- cycle;
\path[draw=gray!45,fill=cyan!17,line width=.45pt] (1.689,0.195) -- (1.351,0.390) -- (1.013,0.195) -- (1.013,-0.195) -- (1.351,-0.390) -- (1.689,-0.195) -- cycle;
\path[draw=gray!45,fill=cyan!17,line width=.45pt] (0.675,-0.390) -- (0.338,-0.195) -- (0.000,-0.390) -- (0.000,-0.780) -- (0.338,-0.975) -- (0.675,-0.780) -- cycle;
\path[draw=gray!45,fill=cyan!17,line width=.45pt] (-1.013,0.195) -- (-1.351,0.390) -- (-1.689,0.195) -- (-1.689,-0.195) -- (-1.351,-0.390) -- (-1.013,-0.195) -- cycle;
\path[draw=gray!45,fill=cyan!17,line width=.45pt] (-0.338,0.195) -- (-0.675,0.390) -- (-1.013,0.195) -- (-1.013,-0.195) -- (-0.675,-0.390) -- (-0.338,-0.195) -- cycle;
\path[draw=gray!45,fill=cyan!17,line width=.45pt] (0.000,-0.390) -- (-0.338,-0.195) -- (-0.675,-0.390) -- (-0.675,-0.780) -- (-0.338,-0.975) -- (-0.000,-0.780) -- cycle;
\draw[black,line width=1.3pt] (0.675,0.780) -- (0.338,0.975);
\draw[black,line width=1.3pt] (0.338,0.975) -- (0.000,0.780);
\draw[black,line width=1.3pt] (0.338,0.195) -- (0.675,0.390);
\draw[black,line width=1.3pt] (0.675,0.390) -- (0.675,0.780);
\draw[black,line width=1.3pt] (1.013,-0.195) -- (0.675,-0.390);
\draw[black,line width=1.3pt] (0.675,-0.780) -- (1.013,-0.975);
\draw[black,line width=1.3pt] (1.013,-0.975) -- (1.351,-0.780);
\draw[black,line width=1.3pt] (1.351,-0.780) -- (1.351,-0.390);
\draw[black,line width=1.3pt] (-1.351,-0.780) -- (-1.013,-0.975);
\draw[black,line width=1.3pt] (-1.013,-0.975) -- (-0.675,-0.780);
\draw[black,line width=1.3pt] (-1.689,-0.195) -- (-2.026,-0.390);
\draw[black,line width=1.3pt] (-2.026,-0.390) -- (-2.026,-0.780);
\draw[black,line width=1.3pt] (-2.026,-0.780) -- (-1.689,-0.975);
\draw[black,line width=1.3pt] (-1.689,-0.975) -- (-1.351,-0.780);
\draw[black,line width=1.3pt] (0.338,-0.195) -- (0.338,0.195);
\draw[black,line width=1.3pt] (2.026,0.780) -- (1.689,0.975);
\draw[black,line width=1.3pt] (1.689,0.975) -- (1.351,0.780);
\draw[black,line width=1.3pt] (1.351,0.780) -- (1.351,0.390);
\draw[black,line width=1.3pt] (1.689,0.195) -- (2.026,0.390);
\draw[black,line width=1.3pt] (2.026,0.390) -- (2.026,0.780);
\draw[black,line width=1.3pt] (-0.675,0.780) -- (-1.013,0.975);
\draw[black,line width=1.3pt] (-1.013,0.975) -- (-1.351,0.780);
\draw[black,line width=1.3pt] (-1.351,0.780) -- (-1.351,0.390);
\draw[black,line width=1.3pt] (0.000,0.780) -- (-0.338,0.975);
\draw[black,line width=1.3pt] (-0.338,0.975) -- (-0.675,0.780);
\draw[black,line width=1.3pt] (1.351,0.390) -- (1.013,0.195);
\draw[black,line width=1.3pt] (1.013,0.195) -- (1.013,-0.195);
\draw[black,line width=1.3pt] (1.351,-0.390) -- (1.689,-0.195);
\draw[black,line width=1.3pt] (1.689,-0.195) -- (1.689,0.195);
\draw[black,line width=1.3pt] (0.675,-0.390) -- (0.338,-0.195);
\draw[black,line width=1.3pt] (0.000,-0.780) -- (0.338,-0.975);
\draw[black,line width=1.3pt] (0.338,-0.975) -- (0.675,-0.780);
\draw[black,line width=1.3pt] (-1.351,0.390) -- (-1.689,0.195);
\draw[black,line width=1.3pt] (-1.689,0.195) -- (-1.689,-0.195);
\draw[black,line width=1.3pt] (-0.675,-0.780) -- (-0.338,-0.975);
\draw[black,line width=1.3pt] (-0.338,-0.975) -- (-0.000,-0.780);
\end{tikzpicture}}
\par\vspace{2pt}{\footnotesize\textbf{Fig. 6.} Only exposed cell edges survive cancellation.\par}
\end{minipage}
\par\vspace{9pt}
\begin{minipage}[t]{0.48\linewidth}
\centering
\resizebox{0.96\linewidth}{!}{\begin{tikzpicture}[x=0.85cm,y=0.85cm,baseline=(current bounding box.center)]
\path[draw=gray!45,fill=cyan!17,line width=.45pt] (0.675,0.780) -- (0.338,0.975) -- (0.000,0.780) -- (0.000,0.390) -- (0.338,0.195) -- (0.675,0.390) -- cycle;
\path[draw=gray!45,fill=cyan!17,line width=.45pt] (1.013,-0.975) -- (0.675,-0.780) -- (0.338,-0.975) -- (0.338,-1.365) -- (0.675,-1.560) -- (1.013,-1.365) -- cycle;
\path[draw=gray!45,fill=cyan!17,line width=.45pt] (1.351,-0.390) -- (1.013,-0.195) -- (0.675,-0.390) -- (0.675,-0.780) -- (1.013,-0.975) -- (1.351,-0.780) -- cycle;
\path[draw=gray!45,fill=cyan!17,line width=.45pt] (4.053,0.780) -- (3.715,0.975) -- (3.377,0.780) -- (3.377,0.390) -- (3.715,0.195) -- (4.053,0.390) -- cycle;
\path[draw=gray!45,fill=cyan!17,line width=.45pt] (-0.338,-0.975) -- (-0.675,-0.780) -- (-1.013,-0.975) -- (-1.013,-1.365) -- (-0.675,-1.560) -- (-0.338,-1.365) -- cycle;
\path[draw=gray!45,fill=cyan!17,line width=.45pt] (-1.013,0.195) -- (-1.351,0.390) -- (-1.689,0.195) -- (-1.689,-0.195) -- (-1.351,-0.390) -- (-1.013,-0.195) -- cycle;
\path[draw=gray!45,fill=cyan!17,line width=.45pt] (-0.338,0.195) -- (-0.675,0.390) -- (-1.013,0.195) -- (-1.013,-0.195) -- (-0.675,-0.390) -- (-0.338,-0.195) -- cycle;
\path[draw=gray!45,fill=cyan!17,line width=.45pt] (1.013,1.365) -- (0.675,1.560) -- (0.338,1.365) -- (0.338,0.975) -- (0.675,0.780) -- (1.013,0.975) -- cycle;
\path[draw=gray!45,fill=cyan!17,line width=.45pt] (1.013,0.195) -- (0.675,0.390) -- (0.338,0.195) -- (0.338,-0.195) -- (0.675,-0.390) -- (1.013,-0.195) -- cycle;
\path[draw=gray!45,fill=cyan!17,line width=.45pt] (-0.338,1.365) -- (-0.675,1.560) -- (-1.013,1.365) -- (-1.013,0.975) -- (-0.675,0.780) -- (-0.338,0.975) -- cycle;
\path[draw=gray!45,fill=cyan!17,line width=.45pt] (-0.675,-0.390) -- (-1.013,-0.195) -- (-1.351,-0.390) -- (-1.351,-0.780) -- (-1.013,-0.975) -- (-0.675,-0.780) -- cycle;
\path[draw=gray!45,fill=cyan!17,line width=.45pt] (-0.675,0.780) -- (-1.013,0.975) -- (-1.351,0.780) -- (-1.351,0.390) -- (-1.013,0.195) -- (-0.675,0.390) -- cycle;
\path[draw=gray!45,fill=cyan!17,line width=.45pt] (0.000,0.780) -- (-0.338,0.975) -- (-0.675,0.780) -- (-0.675,0.390) -- (-0.338,0.195) -- (-0.000,0.390) -- cycle;
\path[draw=gray!45,fill=cyan!17,line width=.45pt] (1.351,0.780) -- (1.013,0.975) -- (0.675,0.780) -- (0.675,0.390) -- (1.013,0.195) -- (1.351,0.390) -- cycle;
\path[draw=gray!45,fill=cyan!17,line width=.45pt] (1.689,0.195) -- (1.351,0.390) -- (1.013,0.195) -- (1.013,-0.195) -- (1.351,-0.390) -- (1.689,-0.195) -- cycle;
\path[draw=gray!45,fill=cyan!17,line width=.45pt] (0.338,-0.975) -- (0.000,-0.780) -- (-0.338,-0.975) -- (-0.338,-1.365) -- (-0.000,-1.560) -- (0.338,-1.365) -- cycle;
\path[draw=gray!45,fill=cyan!17,line width=.45pt] (0.675,-0.390) -- (0.338,-0.195) -- (0.000,-0.390) -- (0.000,-0.780) -- (0.338,-0.975) -- (0.675,-0.780) -- cycle;
\path[draw=gray!45,fill=cyan!17,line width=.45pt] (3.715,0.195) -- (3.377,0.390) -- (3.040,0.195) -- (3.040,-0.195) -- (3.377,-0.390) -- (3.715,-0.195) -- cycle;
\path[draw=gray!45,fill=cyan!17,line width=.45pt] (4.391,0.195) -- (4.053,0.390) -- (3.715,0.195) -- (3.715,-0.195) -- (4.053,-0.390) -- (4.391,-0.195) -- cycle;
\path[draw=gray!45,fill=cyan!17,line width=.45pt] (0.338,1.365) -- (0.000,1.560) -- (-0.338,1.365) -- (-0.338,0.975) -- (-0.000,0.780) -- (0.338,0.975) -- cycle;
\path[draw=gray!45,fill=cyan!17,line width=.45pt] (0.000,-0.390) -- (-0.338,-0.195) -- (-0.675,-0.390) -- (-0.675,-0.780) -- (-0.338,-0.975) -- (-0.000,-0.780) -- cycle;
\draw[black,line width=1.3pt] (0.000,0.390) -- (0.338,0.195);
\draw[black,line width=1.3pt] (0.338,-1.365) -- (0.675,-1.560);
\draw[black,line width=1.3pt] (0.675,-1.560) -- (1.013,-1.365);
\draw[black,line width=1.3pt] (1.013,-1.365) -- (1.013,-0.975);
\draw[black,line width=1.3pt] (1.013,-0.975) -- (1.351,-0.780);
\draw[black,line width=1.3pt] (1.351,-0.780) -- (1.351,-0.390);
\draw[black,line width=1.3pt] (4.053,0.780) -- (3.715,0.975);
\draw[black,line width=1.3pt] (3.715,0.975) -- (3.377,0.780);
\draw[black,line width=1.3pt] (3.377,0.780) -- (3.377,0.390);
\draw[black,line width=1.3pt] (4.053,0.390) -- (4.053,0.780);
\draw[black,line width=1.3pt] (-1.013,-0.975) -- (-1.013,-1.365);
\draw[black,line width=1.3pt] (-1.013,-1.365) -- (-0.675,-1.560);
\draw[black,line width=1.3pt] (-0.675,-1.560) -- (-0.338,-1.365);
\draw[black,line width=1.3pt] (-1.351,0.390) -- (-1.689,0.195);
\draw[black,line width=1.3pt] (-1.689,0.195) -- (-1.689,-0.195);
\draw[black,line width=1.3pt] (-1.689,-0.195) -- (-1.351,-0.390);
\draw[black,line width=1.3pt] (-0.338,-0.195) -- (-0.338,0.195);
\draw[black,line width=1.3pt] (1.013,1.365) -- (0.675,1.560);
\draw[black,line width=1.3pt] (0.675,1.560) -- (0.338,1.365);
\draw[black,line width=1.3pt] (1.013,0.975) -- (1.013,1.365);
\draw[black,line width=1.3pt] (0.338,0.195) -- (0.338,-0.195);
\draw[black,line width=1.3pt] (-0.338,1.365) -- (-0.675,1.560);
\draw[black,line width=1.3pt] (-0.675,1.560) -- (-1.013,1.365);
\draw[black,line width=1.3pt] (-1.013,1.365) -- (-1.013,0.975);
\draw[black,line width=1.3pt] (-1.351,-0.390) -- (-1.351,-0.780);
\draw[black,line width=1.3pt] (-1.351,-0.780) -- (-1.013,-0.975);
\draw[black,line width=1.3pt] (-1.013,0.975) -- (-1.351,0.780);
\draw[black,line width=1.3pt] (-1.351,0.780) -- (-1.351,0.390);
\draw[black,line width=1.3pt] (-0.338,0.195) -- (-0.000,0.390);
\draw[black,line width=1.3pt] (1.351,0.780) -- (1.013,0.975);
\draw[black,line width=1.3pt] (1.351,0.390) -- (1.351,0.780);
\draw[black,line width=1.3pt] (1.689,0.195) -- (1.351,0.390);
\draw[black,line width=1.3pt] (1.351,-0.390) -- (1.689,-0.195);
\draw[black,line width=1.3pt] (1.689,-0.195) -- (1.689,0.195);
\draw[black,line width=1.3pt] (-0.338,-1.365) -- (-0.000,-1.560);
\draw[black,line width=1.3pt] (-0.000,-1.560) -- (0.338,-1.365);
\draw[black,line width=1.3pt] (0.338,-0.195) -- (0.000,-0.390);
\draw[black,line width=1.3pt] (3.377,0.390) -- (3.040,0.195);
\draw[black,line width=1.3pt] (3.040,0.195) -- (3.040,-0.195);
\draw[black,line width=1.3pt] (3.040,-0.195) -- (3.377,-0.390);
\draw[black,line width=1.3pt] (3.377,-0.390) -- (3.715,-0.195);
\draw[black,line width=1.3pt] (4.391,0.195) -- (4.053,0.390);
\draw[black,line width=1.3pt] (3.715,-0.195) -- (4.053,-0.390);
\draw[black,line width=1.3pt] (4.053,-0.390) -- (4.391,-0.195);
\draw[black,line width=1.3pt] (4.391,-0.195) -- (4.391,0.195);
\draw[black,line width=1.3pt] (0.338,1.365) -- (0.000,1.560);
\draw[black,line width=1.3pt] (0.000,1.560) -- (-0.338,1.365);
\draw[black,line width=1.3pt] (0.000,-0.390) -- (-0.338,-0.195);
\end{tikzpicture}}
\par\vspace{2pt}{\footnotesize\textbf{Fig. 7.} Disconnected components and a central hole.\par}
\end{minipage}
\hfill
\begin{minipage}[t]{0.48\linewidth}
\centering
\resizebox{0.96\linewidth}{!}{\begin{tikzpicture}[x=0.85cm,y=0.85cm,baseline=(current bounding box.center)]
\path[draw=gray!45,fill=cyan!17,line width=.45pt] (0.675,0.780) -- (0.338,0.975) -- (0.000,0.780) -- (0.000,0.390) -- (0.338,0.195) -- (0.675,0.390) -- cycle;
\path[draw=gray!45,fill=cyan!17,line width=.45pt] (1.013,-0.975) -- (0.675,-0.780) -- (0.338,-0.975) -- (0.338,-1.365) -- (0.675,-1.560) -- (1.013,-1.365) -- cycle;
\path[draw=gray!45,fill=cyan!17,line width=.45pt] (1.351,-0.390) -- (1.013,-0.195) -- (0.675,-0.390) -- (0.675,-0.780) -- (1.013,-0.975) -- (1.351,-0.780) -- cycle;
\path[draw=gray!45,fill=cyan!17,line width=.45pt] (4.053,0.780) -- (3.715,0.975) -- (3.377,0.780) -- (3.377,0.390) -- (3.715,0.195) -- (4.053,0.390) -- cycle;
\path[draw=gray!45,fill=cyan!17,line width=.45pt] (-0.338,-0.975) -- (-0.675,-0.780) -- (-1.013,-0.975) -- (-1.013,-1.365) -- (-0.675,-1.560) -- (-0.338,-1.365) -- cycle;
\path[draw=gray!45,fill=cyan!17,line width=.45pt] (-1.013,0.195) -- (-1.351,0.390) -- (-1.689,0.195) -- (-1.689,-0.195) -- (-1.351,-0.390) -- (-1.013,-0.195) -- cycle;
\path[draw=gray!45,fill=cyan!17,line width=.45pt] (-0.338,0.195) -- (-0.675,0.390) -- (-1.013,0.195) -- (-1.013,-0.195) -- (-0.675,-0.390) -- (-0.338,-0.195) -- cycle;
\path[draw=gray!45,fill=cyan!17,line width=.45pt] (1.013,1.365) -- (0.675,1.560) -- (0.338,1.365) -- (0.338,0.975) -- (0.675,0.780) -- (1.013,0.975) -- cycle;
\path[draw=gray!45,fill=cyan!17,line width=.45pt] (1.013,0.195) -- (0.675,0.390) -- (0.338,0.195) -- (0.338,-0.195) -- (0.675,-0.390) -- (1.013,-0.195) -- cycle;
\path[draw=gray!45,fill=cyan!17,line width=.45pt] (-0.338,1.365) -- (-0.675,1.560) -- (-1.013,1.365) -- (-1.013,0.975) -- (-0.675,0.780) -- (-0.338,0.975) -- cycle;
\path[draw=gray!45,fill=cyan!17,line width=.45pt] (-0.675,-0.390) -- (-1.013,-0.195) -- (-1.351,-0.390) -- (-1.351,-0.780) -- (-1.013,-0.975) -- (-0.675,-0.780) -- cycle;
\path[draw=gray!45,fill=cyan!17,line width=.45pt] (-0.675,0.780) -- (-1.013,0.975) -- (-1.351,0.780) -- (-1.351,0.390) -- (-1.013,0.195) -- (-0.675,0.390) -- cycle;
\path[draw=gray!45,fill=cyan!17,line width=.45pt] (0.000,0.780) -- (-0.338,0.975) -- (-0.675,0.780) -- (-0.675,0.390) -- (-0.338,0.195) -- (-0.000,0.390) -- cycle;
\path[draw=gray!45,fill=cyan!17,line width=.45pt] (1.351,0.780) -- (1.013,0.975) -- (0.675,0.780) -- (0.675,0.390) -- (1.013,0.195) -- (1.351,0.390) -- cycle;
\path[draw=gray!45,fill=cyan!17,line width=.45pt] (1.689,0.195) -- (1.351,0.390) -- (1.013,0.195) -- (1.013,-0.195) -- (1.351,-0.390) -- (1.689,-0.195) -- cycle;
\path[draw=gray!45,fill=cyan!17,line width=.45pt] (0.338,-0.975) -- (0.000,-0.780) -- (-0.338,-0.975) -- (-0.338,-1.365) -- (-0.000,-1.560) -- (0.338,-1.365) -- cycle;
\path[draw=gray!45,fill=cyan!17,line width=.45pt] (0.675,-0.390) -- (0.338,-0.195) -- (0.000,-0.390) -- (0.000,-0.780) -- (0.338,-0.975) -- (0.675,-0.780) -- cycle;
\path[draw=gray!45,fill=cyan!17,line width=.45pt] (3.715,0.195) -- (3.377,0.390) -- (3.040,0.195) -- (3.040,-0.195) -- (3.377,-0.390) -- (3.715,-0.195) -- cycle;
\path[draw=gray!45,fill=cyan!17,line width=.45pt] (4.391,0.195) -- (4.053,0.390) -- (3.715,0.195) -- (3.715,-0.195) -- (4.053,-0.390) -- (4.391,-0.195) -- cycle;
\path[draw=gray!45,fill=cyan!17,line width=.45pt] (0.338,1.365) -- (0.000,1.560) -- (-0.338,1.365) -- (-0.338,0.975) -- (-0.000,0.780) -- (0.338,0.975) -- cycle;
\path[draw=gray!45,fill=cyan!17,line width=.45pt] (0.000,-0.390) -- (-0.338,-0.195) -- (-0.675,-0.390) -- (-0.675,-0.780) -- (-0.338,-0.975) -- (-0.000,-0.780) -- cycle;
\draw[red!80!black,line width=1.3pt] (0.000,0.390) -- (0.338,0.195);
\draw[red!80!black,line width=1.3pt] (0.338,-1.365) -- (0.675,-1.560);
\draw[red!80!black,line width=1.3pt] (0.675,-1.560) -- (1.013,-1.365);
\draw[red!80!black,line width=1.3pt] (1.013,-1.365) -- (1.013,-0.975);
\draw[red!80!black,line width=1.3pt] (1.013,-0.975) -- (1.351,-0.780);
\draw[red!80!black,line width=1.3pt] (1.351,-0.780) -- (1.351,-0.390);
\draw[red!80!black,line width=1.3pt] (4.053,0.780) -- (3.715,0.975);
\draw[red!80!black,line width=1.3pt] (3.715,0.975) -- (3.377,0.780);
\draw[red!80!black,line width=1.3pt] (3.377,0.780) -- (3.377,0.390);
\draw[red!80!black,line width=1.3pt] (4.053,0.390) -- (4.053,0.780);
\draw[red!80!black,line width=1.3pt] (-1.013,-0.975) -- (-1.013,-1.365);
\draw[red!80!black,line width=1.3pt] (-1.013,-1.365) -- (-0.675,-1.560);
\draw[red!80!black,line width=1.3pt] (-0.675,-1.560) -- (-0.338,-1.365);
\draw[red!80!black,line width=1.3pt] (-1.351,0.390) -- (-1.689,0.195);
\draw[red!80!black,line width=1.3pt] (-1.689,0.195) -- (-1.689,-0.195);
\draw[red!80!black,line width=1.3pt] (-1.689,-0.195) -- (-1.351,-0.390);
\draw[red!80!black,line width=1.3pt] (-0.338,-0.195) -- (-0.338,0.195);
\draw[red!80!black,line width=1.3pt] (1.013,1.365) -- (0.675,1.560);
\draw[red!80!black,line width=1.3pt] (0.675,1.560) -- (0.338,1.365);
\draw[red!80!black,line width=1.3pt] (1.013,0.975) -- (1.013,1.365);
\draw[red!80!black,line width=1.3pt] (0.338,0.195) -- (0.338,-0.195);
\draw[red!80!black,line width=1.3pt] (-0.338,1.365) -- (-0.675,1.560);
\draw[red!80!black,line width=1.3pt] (-0.675,1.560) -- (-1.013,1.365);
\draw[red!80!black,line width=1.3pt] (-1.013,1.365) -- (-1.013,0.975);
\draw[red!80!black,line width=1.3pt] (-1.351,-0.390) -- (-1.351,-0.780);
\draw[red!80!black,line width=1.3pt] (-1.351,-0.780) -- (-1.013,-0.975);
\draw[red!80!black,line width=1.3pt] (-1.013,0.975) -- (-1.351,0.780);
\draw[red!80!black,line width=1.3pt] (-1.351,0.780) -- (-1.351,0.390);
\draw[red!80!black,line width=1.3pt] (-0.338,0.195) -- (-0.000,0.390);
\draw[red!80!black,line width=1.3pt] (1.351,0.780) -- (1.013,0.975);
\draw[red!80!black,line width=1.3pt] (1.351,0.390) -- (1.351,0.780);
\draw[red!80!black,line width=1.3pt] (1.689,0.195) -- (1.351,0.390);
\draw[red!80!black,line width=1.3pt] (1.351,-0.390) -- (1.689,-0.195);
\draw[red!80!black,line width=1.3pt] (1.689,-0.195) -- (1.689,0.195);
\draw[red!80!black,line width=1.3pt] (-0.338,-1.365) -- (-0.000,-1.560);
\draw[red!80!black,line width=1.3pt] (-0.000,-1.560) -- (0.338,-1.365);
\draw[red!80!black,line width=1.3pt] (0.338,-0.195) -- (0.000,-0.390);
\draw[red!80!black,line width=1.3pt] (3.377,0.390) -- (3.040,0.195);
\draw[red!80!black,line width=1.3pt] (3.040,0.195) -- (3.040,-0.195);
\draw[red!80!black,line width=1.3pt] (3.040,-0.195) -- (3.377,-0.390);
\draw[red!80!black,line width=1.3pt] (3.377,-0.390) -- (3.715,-0.195);
\draw[red!80!black,line width=1.3pt] (4.391,0.195) -- (4.053,0.390);
\draw[red!80!black,line width=1.3pt] (3.715,-0.195) -- (4.053,-0.390);
\draw[red!80!black,line width=1.3pt] (4.053,-0.390) -- (4.391,-0.195);
\draw[red!80!black,line width=1.3pt] (4.391,-0.195) -- (4.391,0.195);
\draw[red!80!black,line width=1.3pt] (0.338,1.365) -- (0.000,1.560);
\draw[red!80!black,line width=1.3pt] (0.000,1.560) -- (-0.338,1.365);
\draw[red!80!black,line width=1.3pt] (0.000,-0.390) -- (-0.338,-0.195);
\end{tikzpicture}}
\par\vspace{2pt}{\footnotesize\textbf{Fig. 8.} All surviving edges form closed contour cycles.\par}
\end{minipage}
\par\vspace{9pt}
\endgroup
\appendix
\section{Derivation of all six local formulas}\label{app:formulas}
Set $y=2k$ or $y=2k+1$ in~\eqref{eq:forward}, substitute the five triples~\eqref{eq:aux}, normalize each, and apply~\eqref{eq:reverse}. In the ordinary map-index range, the result is:
\begin{table}[h!]\centering\small
\caption{Even-row formulas, $y=2k$. Every entry is an integer pair.}\label{tab:even}
\begin{tabular}{ccc}\toprule Record & Point $P_i$ & Tail $T_i$\\\midrule
$e_1$&$(x,y-1)$&$(x,y+1)$\\
$e_2$&$(x,y)$&$(x,y+1)$\\
$e_3$&$(x,y)$&$(x-1,y+1)$\\
$e_4$&$(x-1,y-1)$&$(x-1,y+1)$\\
$e_5$&$(x-1,y-1)$&$(x,y)$\\
$e_6$&$(x,y-1)$&$(x,y)$\\\bottomrule
\end{tabular}
\end{table}
\begin{table}[h!]\centering\small
\caption{Odd-row formulas, $y=2k+1$.}\label{tab:odd}
\begin{tabular}{ccc}\toprule Record & Point $P_i$ & Tail $T_i$\\\midrule
$e_1$&$(x+1,y-1)$&$(x+1,y+1)$\\
$e_2$&$(x,y)$&$(x+1,y+1)$\\
$e_3$&$(x,y)$&$(x,y+1)$\\
$e_4$&$(x,y-1)$&$(x,y+1)$\\
$e_5$&$(x,y-1)$&$(x,y)$\\
$e_6$&$(x+1,y-1)$&$(x,y)$\\\bottomrule
\end{tabular}
\end{table}

For illustration, if $y=2k$, then $A=x+k+1$ and $B=2k+1$. Applying $D$ to $U_2=(A+1,B,1)$ requires normalization by one, giving $(A,B-1,0)$; thus $D(U_2)=(x,y-1)$. Applying $D$ to $U_1=(A+1,B+1,0)$ gives $(x,y+1)$. Hence $e_1=((x,y-1),(x,y+1))$. The remaining entries follow by the same substitutions, and no geometric approximation is involved.

\subsection{Complete five-triple evaluation}
Write $D_i=D(U_i)$. Direct substitution, using $D(a,b,c)=
(a-m)-\lfloor((b-m)+(c-m))/2\rfloor-1,\ b-c-1$ with
$m=\min(a,b,c)$, yields the following \emph{five} coordinate
identifiers. The three point-role vertices are $D_2,D_0,D_4$;
the three tail-role vertices are $D_1,D_3,D_0$.
\begin{center}\small
\begin{tabular}{ccll}\toprule
$i$ & triple & $y=2k$ & $y=2k+1$\\\midrule
0 & $(A,B,0)$ & $(x,y)$ & $(x,y)$\\
1 & $(A+1,B+1,0)$ & $(x,y+1)$ & $(x+1,y+1)$\\
2 & $(A+1,B,1)$ & $(x,y-1)$ & $(x+1,y-1)$\\
3 & $(A,B+1,0)$ & $(x-1,y+1)$ & $(x,y+1)$\\
4 & $(A,B,1)$ & $(x-1,y-1)$ & $(x,y-1)$\\\bottomrule
\end{tabular}
\end{center}
\noindent\textbf{Derivation.}
For $y=2k$ the unnormalized input is $(A,B,0)=(x+k+1,2k+1,0)$.
The normalized inverse map is invariant under common integer translations:
for every integer $t$,
\[
 D(a+t,b+t,c+t)=D(a,b,c),
\]
because normalization removes the same common shift. Consequently one
may compute $D$ using any representative and the difference $b-c$.
For the five triples, the respective differences $b-c$ are
$2k+1,2k+2,2k,2k+2,2k$; the corresponding first coordinates
simplify to $x,x,x,x-1,x-1$.
For $y=2k+1$, the input is $(x+k+2,2k+2,0)$; the five
$b-c$ values are $2k+2,2k+3,2k+1,2k+3,2k+1$ and their first
coordinates are $x,x+1,x+1,x,x$. These calculations give all
five rows above, and insertion into the six pairs in
\eqref{eq:records} gives Tables~\ref{tab:even}--\ref{tab:odd}.
All operations here are mathematical floor operations. The C implementation
uses truncating integer division; the two coincide for its normal
nonnegative map-index inputs but should not be conflated on arbitrary
negative C inputs.

\begin{lemma}[No within-role identification]
For every integer $(x,y)$ the point identifiers $D_2,D_0,D_4$
are pairwise distinct, as are the tail identifiers $D_1,D_3,D_0$.
\end{lemma}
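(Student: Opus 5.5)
The plan is to read the claim off the five-triple evaluation table just derived, splitting by the parity of $y$. No further structure is needed: it suffices to show that two integer pairs differ in at least one coordinate, and the second coordinate $Y=b-c-1$ usually does this on its own.

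For even $y=2k$, the table gives the point identifiers
\[
D_2=(x,y-1),\quad D_0=(x,y),\quad D_4=(x-1,y-1).
\]
The pair $D_0$ differs from both others in the second coordinate, since $y\neq y-1$. The pair $D_2,D_4$ shares the second coordinate $y-1$ but differs in the first coordinate, $x\neq x-1$. The tail identifiers are
\[
D_1=(x,y+1),\quad D_3=(x-1,y+1),\quad D_0=(x,y).
\]
Here $D_0$ is separated from the others by its second coordinate, and $D_1,D_3$ are separated by their first coordinates.

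For odd $y=2k+1$, the points are $(x+1,y-1),(x,y),(x,y-1)$ and the tails are $(x+1,y+1),(x,y+1),(x,y)$. The same pattern holds: $D_0$ differs from the others in $Y$, and the remaining pair differs in $X$ by exactly one.

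The argument does have one step that needs care. I must confirm that the table entries hold for \emph{every} integer $(x,y)$, not only the nonnegative map-index range, because the lemma is stated for all integers. To do this I would rely on the translation invariance $D(a+t,b+t,c+t)=D(a,b,c)$ and on the fact that \eqref{eq:forward} and \eqref{eq:reverse} use the mathematical floor. With these, the differences $b-c$ and the first-coordinate simplifications listed in the derivation are pure identities in $k$, valid for $k$ of either sign.

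A shortcut is also available. Distinctness of the second coordinates $b-c-1$ already separates $D_0$ from the other identifiers, and the remaining pair in each role has the same $b-c$ but first components of the triples differing by one. Since the value of $m$ is the same for both of these triples, the floor term agrees, so the first output coordinates differ by one. This gives a parity-free check that needs no case split.
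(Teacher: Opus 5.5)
Your main argument is the paper's proof: in each parity you read off the five-triple table, separate $D_0$ from the other two identifiers of its role by the second coordinate, and separate the remaining pair by a unit difference in the first coordinate. Your remark that the table holds for all integers, via translation invariance and the mathematical floor, agrees with the appendix derivation.

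The optional parity-free shortcut has a true conclusion but a false justification. The minima of the paired triples need not agree. For $(x,y)=(-1,0)$ one has $U_2=(1,1,1)$ and $U_4=(0,1,1)$, with $m=1$ and $m=0$. There the normalized first entries coincide, and it is the floor terms that differ by one.

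The correct reason is that $m$ cancels: $\lfloor((b-m)+(c-m))/2\rfloor=\lfloor(b+c)/2\rfloor-m$, so $X=a-\lfloor(b+c)/2\rfloor-1$. Each remaining pair ($U_2,U_4$ and $U_1,U_3$) has identical second and third entries, not merely equal $b-c$, and first entries differing by one, so their $X$ values differ by one for every integer input.
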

\begin{proof}
In either parity, the point identifiers have second coordinates
$y-1,y,y-1$; the two with equal second coordinate differ by one
in the first. The tail identifiers have second coordinates
$y+1,y+1,y$; the two with equal second coordinate differ by one
in the first. Hence neither role has duplicate vertices.
\end{proof}

\section{Full endpoint-incidence tables}\label{app:vertices}
The notation $[i,j]$ means that a cell contributes local edges $e_i,e_j$ at the given role-qualified vertex. The following tables are obtained by solving the endpoint equations in Appendix~\ref{app:formulas} for the cell label.
\begin{center}\small
\begin{tabular}{llll}\toprule
Role and parity of $v$ & Cell 1 & Cell 2 & Cell 3\\\midrule
$\PP(u,v)$, $v$ even & $(u,v):[2,3]$ & $(u,v+1):[4,5]$ & $(u-1,v+1):[1,6]$\\
$\PP(u,v)$, $v$ odd & $(u,v):[2,3]$ & $(u,v+1):[1,6]$ & $(u+1,v+1):[4,5]$\\
$\TT(u,v)$, $v$ even & $(u,v):[5,6]$ & $(u,v-1):[3,4]$ & $(u-1,v-1):[1,2]$\\
$\TT(u,v)$, $v$ odd & $(u,v):[5,6]$ & $(u,v-1):[1,2]$ & $(u+1,v-1):[3,4]$\\\bottomrule
\end{tabular}
\end{center}
Each row lists exactly three cells, each contributing two incident local edges. Across the three cells, the six contributions pair into exactly three distinct shared records. This proves degree three in the \emph{full} incidence graph. After selecting a finite cell set and cancelling shared records, the degree becomes zero or two, as shown in Theorem~\ref{thm:degree}.

\subsection{Exhaustiveness and three-edge incidence}
The endpoint lists can be derived without geometric assumptions.
For example, fix $\PP(u,v)$ with even $v$. Its second coordinate
can be either $y$ (from the $e_2,e_3$ point), or $y-1$
(from $e_1,e_6$ or $e_4,e_5$). In the first case $y=v$ and
$x=u$. In the second case $y=v+1$ is odd; solving the two
odd-row first-coordinate equations gives respectively
$x=u-1$ and $x=u$. Thus exactly the three cells displayed
in the first row of the table are possible. The other three
rows follow by the same two-case inversion.

For each role-qualified vertex, the three cells contribute two
edges each. The following table gives the \emph{three pairs of
identical records}, using the cell labels $F_1,F_2,F_3$ in the
order of the preceding table. An entry $F_a:i=F_b:j$
means equality of the complete ordered record, not merely equality
of its endpoint at the vertex.
\begin{center}\small
\begin{tabular}{p{30mm}p{103mm}}\toprule
Vertex case & Paired local edges \\\midrule
$\PP(u,v)$, $v$ even & $F_1:2=F_2:5$; $F_1:3=F_3:6$; $F_2:4=F_3:1$\\
$\PP(u,v)$, $v$ odd & $F_1:2=F_3:5$; $F_1:3=F_2:6$; $F_2:1=F_3:4$\\
$\TT(u,v)$, $v$ even & $F_1:5=F_3:2$; $F_1:6=F_2:3$; $F_2:4=F_3:1$\\
$\TT(u,v)$, $v$ odd & $F_1:5=F_2:2$; $F_1:6=F_3:3$; $F_2:1=F_3:4$\\\bottomrule
\end{tabular}
\end{center}
The three paired records are distinct: in each cell its two incident
edges have distinct other endpoints, and the three pairs form a
triangle of cell adjacencies. Thus each vertex has exactly three
full-lattice incident edges, each generated by exactly two cells.
This establishes both edge uniqueness and endpoint-incidence
preservation from the explicit integer equations.

\section{All twelve neighbor identities}\label{app:neighbors}
The identities below abbreviate $e_i(x,y)=e_j(x+\delta_x,y+\delta_y)$; each is checked by substituting Tables~\ref{tab:even} and~\ref{tab:odd}.
\begin{center}\small
\begin{tabular}{cll}\toprule
$i$&Even $y$&Odd $y$\\\midrule
1&$e_1(x,y)=e_4(x+1,y)$&$e_1(x,y)=e_4(x+1,y)$\\
2&$e_2(x,y)=e_5(x,y+1)$&$e_2(x,y)=e_5(x+1,y+1)$\\
3&$e_3(x,y)=e_6(x-1,y+1)$&$e_3(x,y)=e_6(x,y+1)$\\
4&$e_4(x,y)=e_1(x-1,y)$&$e_4(x,y)=e_1(x-1,y)$\\
5&$e_5(x,y)=e_2(x-1,y-1)$&$e_5(x,y)=e_2(x,y-1)$\\
6&$e_6(x,y)=e_3(x,y-1)$&$e_6(x,y)=e_3(x+1,y-1)$\\\bottomrule
\end{tabular}
\end{center}
Together with Appendix~\ref{app:vertices}, these equalities rule out accidental matching with non-neighbors: the endpoints determine one of the three edge types at a point-role vertex, and each such type occurs in exactly two incident cells.

\subsection{Parity-complete substitution certificate}
For completeness, the following table records the two endpoints
$(P;T)$ of every even-row edge and the partner obtained by
substitution in the neighboring row. The equality is literal
integer-pair equality; $x,y$ are arbitrary integers in the
mathematical parity extension.
\begin{center}\scriptsize
\begin{tabular}{clll}\toprule
$i$ & central even-row $(P;T)$ & partner cell & partner edge \\\midrule
1 & $(x,y-1);(x,y+1)$ & $(x+1,y)$ & $e_4$\\
2 & $(x,y);(x,y+1)$ & $(x,y+1)$ & $e_5$\\
3 & $(x,y);(x-1,y+1)$ & $(x-1,y+1)$ & $e_6$\\
4 & $(x-1,y-1);(x-1,y+1)$ & $(x-1,y)$ & $e_1$\\
5 & $(x-1,y-1);(x,y)$ & $(x-1,y-1)$ & $e_2$\\
6 & $(x,y-1);(x,y)$ & $(x,y-1)$ & $e_3$\\\bottomrule
\end{tabular}
\end{center}
\begin{center}\scriptsize
\begin{tabular}{clll}\toprule
$i$ & central odd-row $(P;T)$ & partner cell & partner edge \\\midrule
1 & $(x+1,y-1);(x+1,y+1)$ & $(x+1,y)$ & $e_4$\\
2 & $(x,y);(x+1,y+1)$ & $(x+1,y+1)$ & $e_5$\\
3 & $(x,y);(x,y+1)$ & $(x,y+1)$ & $e_6$\\
4 & $(x,y-1);(x,y+1)$ & $(x-1,y)$ & $e_1$\\
5 & $(x,y-1);(x,y)$ & $(x,y-1)$ & $e_2$\\
6 & $(x+1,y-1);(x,y)$ & $(x+1,y-1)$ & $e_3$\\\bottomrule
\end{tabular}
\end{center}
Every partner equality follows by inserting the indicated neighbor
coordinates into the \emph{opposite-parity} or same-parity row
of Appendix~\ref{app:formulas}. Together with the exhaustive
vertex-inversion argument in Appendix~\ref{app:vertices}, these
twelve substitutions prove that each record is generated by
exactly two, and never three, cells.

\section{Reproducible computations, verification, and limits}
\label{app:verification}
This appendix separates \emph{symbolic proof} from finite computational
checks. The former establishes the statements for every admissible
integer cell label; the latter detects transcription mistakes and
tests the cycle-count consequences on finite inputs.

\subsection{Reproducible program}
The archive accompanying this manuscript contains
\texttt{verify\_appendices.py}, an executable Python 3 program
that independently implements (i) normalization and the five
A--B--C triples, (ii) the parity-expanded record formulas,
(iii) neighbor matching, (iv) full-lattice endpoint incidence,
and (v) cycle extraction and an independent selected-cell/complement
component count. It requires only the Python standard library.
Run it with
\begin{center}\texttt{python3 verify\_appendices.py}\end{center}
The deterministic pseudorandom generator is initialized with seed
\texttt{20261018}.

\subsection{Finite-check results}
The verification program completed the following checks:
\begin{center}\small
\begin{tabular}{p{100mm}r}\toprule
Check & Count \\\midrule
Cells comparing direct A--B--C records with parity formulas & 5,041\\
Individual neighbor edge equalities & 29,394\\
Interior cell-edge incidences tested for multiplicity and valence & 21,960\\
Finite selected-cell sets checked for cycle count $m=c+h$ & 504\\\bottomrule
\end{tabular}
\end{center}
The cell tests include negative $x$ indices; the direct A--B--C
conversion is tested on $0\le y\le70$, and the incidence tests
avoid the finite-window boundary. The 504 finite sets include the
empty set, singleton and disconnected examples, a filled rectangle,
and 500 seeded pseudorandom subsets of an $8\times8$ window.
For each set, the program independently counts connected components
in the selected cell-adjacency graph and bounded connected components
of the complementary cell-adjacency graph in a padded rectangle.
Every tested set satisfied $m=c+h$. These are finite checks, not
an exhaustive verification of all finite subsets.

\subsection{Scope of the established result}
\begin{enumerate}[label=\textbf{\Alph*.}]
\item \textbf{Symbolic coordinate identities.} The five-triple
      table and twelve neighbor identities are algebraic consequences
      of the stated floor-based formulas for all integer $x,y$.
\item \textbf{Combinatorial incidence.} The endpoint inversion
      and paired-edge table prove exactly three incident cells
      and three incident edges per role-qualified vertex.
\item \textbf{Topology.} Edge cancellation and degree-two incidence
      prove the complete cycle decomposition for every finite set.
\item \textbf{Algorithm.} Expected linear complexity concerns the
      indexed dictionary implementation, not the original C scans.
\item \textbf{Source arithmetic.} The original C forward conversion
      uses integer division and differs from mathematical floor for
      some negative indices. The source-level theorem is asserted on
      its ordinary nonnegative map-index domain; the all-integer
      formulas define a separate mathematical extension.
\item \textbf{Implementation limits.} The C routine's fixed array
      capacities, sentinel values, and closure conventions require
      separate software verification. This note does not claim to
      establish their unrestricted correctness.
\end{enumerate}

\section*{Code and reproducibility}\begingroup\sloppy
The original C source excerpts and independent Python verification programs are publicly available at
\url{https://github.com/Filuaou/hexagones-abc-boundaries}.
The files \path{DualCoordinateTransforms.inc.c} and \path{MovementBoundaryTopology.inc.c} are historical source excerpts, not a self-contained C application. The scripts \path{verify_appendices.py} and \path{compare_encodings.py} independently test the mathematical coordinate records and the cube-coordinate incidence comparison; they do not establish unrestricted correctness of the original fixed-capacity game routine. The repository contains code and verification materials; this article is intended for separate distribution through arXiv.

\endgroup\section*{Acknowledgements and disclosure of AI assistance}
The author acknowledges the use of ChatGPT (OpenAI) for assistance with source-code inspection, organization and refactoring of the surrounding project, mathematical exposition, verification-script development, and preparation of the LaTeX/TikZ manuscript. The original C algorithms and coordinate formulas were not altered as part of the mathematical presentation. The author remains responsible for checking the correctness of the proofs, citations, code, and final manuscript. This acknowledgement does not attribute scientific authorship to the AI system.

\section*{Source note}
The coordinate and record formulas were transcribed from the active Increment 38 project source, \path{DualCoordinateTransforms.inc.c} and \path{MovementBoundaryTopology.inc.c}. The historical source excerpts and verification scripts are available in the public code repository cited above; they are not required to compile the arXiv manuscript. Independent mathematical review is recommended.
\end{document}